\newtheorem{Thm}{Theorem} 
\newtheorem{Lem}[Thm]{Lemma}
\newtheorem{Prop}[Thm]{Proposition}
\newtheorem{Cor}[Thm]{Corollary}
\theoremstyle{definition}
\newtheorem*{ThmA}{Theorem A}
\newtheorem*{ThmB}{Theorem B}
\newtheorem*{ThmC}{Theorem C}
\newtheorem*{CorD}{Corollary D}
\numberwithin{equation}{section}
\renewcommand{\phi}{\varphi}
\newcommand{\C}{\mathrm{C}}
\newcommand{\N}{\mathrm{N}}
\newcommand{\Z}{\mathrm{Z}}
\newcommand{\pcore}{\mathrm{O}}
\newcommand{\core}{\mathrm{core}}
\newcommand{\E}{\mathrm{E}}
\newcommand{\F}{\mathrm{F}}
\newcommand{\K}{\mathrm{K}}
\newcommand{\ZZ}{\mathbb{Z}}
\newcommand{\QQ}{\mathbb{Q}}
\newcommand{\FF}{\mathbb{F}}
\newcommand{\FS}{\mathcal{F}}
\newcommand{\Aut}{\operatorname{Aut}}
\newcommand{\Inn}{\operatorname{Inn}}
\newcommand{\Out}{\operatorname{Out}}
\newcommand{\GL}{\operatorname{GL}}
\newcommand{\SL}{\operatorname{SL}}
\newcommand{\PSU}{\operatorname{PSU}}
\newcommand{\PSL}{\operatorname{PSL}}
\newcommand{\PGL}{\operatorname{PGL}}
\newcommand{\Sz}{\operatorname{Sz}}
\newcommand{\Irr}{\operatorname{Irr}}
\newcommand{\Gal}{\operatorname{Gal}}
\newcommand{\Syl}{\operatorname{Syl}}
\newcommand{\Ker}{\operatorname{Ker}}
\newcommand{\tr}{\operatorname{tr}}
\newcommand{\TT}{\mathrm{t}}
\title{Groups with 2-generated Sylow subgroups\\ and their character tables}
\author{Alexander Moretó\footnote{Departament de Matem\`atiques, Universitat de Val\`encia, 46100 Burjassot, Val\`encia, Spain, \href{mailto:alexander.moreto@uv.es}{alexander.moreto@uv.es}} \ and Benjamin Sambale\footnote{Institut für Algebra, Zahlentheorie und Diskrete Mathematik, Leibniz Universität Hannover, Welfengarten 1, 30167 Hannover, Germany,
\href{mailto:sambale@math.uni-hannover.de}{sambale@math.uni-hannover.de}}}
\date{\today}
\begin{document}
\frenchspacing
\maketitle
\begin{abstract}\noindent
Let $G$ be a finite group with Sylow $p$-subgroup $P$. We show that the character table of $G$ determines whether $P$ has maximal nilpotency class and whether $P$ is a minimal non-abelian group. The latter result is obtained from a precise classification of the corresponding groups $G$ in terms of their composition factors. For $p$-constrained groups $G$ we prove further that the character table determines whether $P$ can be generated by two elements.
\end{abstract}

\textbf{Keywords:} maximal class; minimal non-abelian; Sylow subgroup; fusion system; character table\\
\textbf{AMS classification:} 20C15, 20D20

\renewcommand{\sectionautorefname}{Section}
\section{Introduction}

Recently, Navarro and the second author \cite{NS22} have investigated finite groups $G$ with a Sylow $p$-subgroup $P$ such that  $|P:P'|=p^2$ or $|P:\Z(P)|=p^2$ where $P'=[P,P]$ denotes the commutator subgroup and $\Z(P)$ is the center of $P$. It was proved that both properties can be read off from the character table $X(G)$ of $G$. This was another contribution to Richard Brauer's Problem 12 of
\cite{BrauerLectures}, which asks what properties of a Sylow $p$-subgroup $P$ are determined by $X(G)$. We refer the reader to the introduction of \cite{NS22} and \cite{sam20} for a collection of the known results on this problem. We just mention that one important property is that $X(G)$ knows whether $P$ is abelian.
While there is an elementary proof of the case  $p=2$ by Camina--Herzog~\cite{CaminaHerzog}, the full solution has required the classification of finite simple groups (see \cite{KimmerleSandling,NST,MN}).

After dealing with $P'$ and $\Z(P)$, it is natural to turn our attention to the Frattini subgroup $\Phi(P)$ of $P$. Recall that $|P:\Phi(P)|\le p$ holds if and only if $P$ is cyclic. It is easy to show that this property can be read off from $X(G)$ (see \cite[Corollary~3.12]{Nav2}).
In the first part of the present paper we consider groups $G$ with $|P:\Phi(P)|=p^2$, i.\,e. $P$ is generated by two elements, but not by one. For $p=2$ this property is detectable by $X(G)$ as was shown by Navarro et al.~\cite{NRSV}. We obtain the corresponding result for odd primes $p$ provided that $G$ is $p$-constrained in \autoref{corpsolv}. In the general case we offer a partial solution depending on the socle of $G$ (see \autoref{lemred} and the subsequent remark).

Our next objective are groups with Sylow $p$-subgroups $P$ of maximal nilpotency class.
For $p=2$, this property is equivalent to $|P:P'|=4$. This case was previously handled in an elementary fashion by Navarro--Sambale--Tiep~\cite{NaSaTi}. The general result is our first main theorem.

\begin{ThmA}
The character table of a finite group $G$ determines whether $G$ has Sylow $p$-subgroups of maximal nilpotency class.
\end{ThmA}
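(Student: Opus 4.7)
The plan is to reduce Theorem~A to checking, purely from $X(G)$, whether some $p$-element of $G$ has centralizer of $p$-part equal to $p^2$. The motivation is a classical centralizer criterion of Blackburn: a $p$-group $P$ of order $p^n$ with $n \geq 3$ is of maximal nilpotency class if and only if it contains an element $x$ with $|\C_P(x)| = p^2$. For $|P| \leq p^2$ the group $P$ is abelian and automatically of maximal class; since $|P| = |G|_p$ is visible in $X(G)$, these boundary cases can be handled directly.

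The bulk of the argument is to justify that the value set $\{|\C_P(x)| : x \in P\}$ can be extracted from $X(G)$. It is classical that the character table determines element orders and all centralizer orders $|\C_G(g)|$; hence the $G$-conjugacy classes of $p$-elements are identifiable, and each such class yields a value $|\C_G(g)|_p$. The Sylow-theoretic link I need is that for $g \in P$ a $p$-element, $\C_P(g)$ is a Sylow $p$-subgroup of $\C_G(g)$. This follows from $g \in \Z(\C_G(g))$ together with the fact that two Sylow $p$-subgroups of $G$ containing $g$ are conjugate by an element of $\C_G(g)$; consequently $|\C_P(g)| = |\C_G(g)|_p$. Since every $p$-element of $G$ is $G$-conjugate to an element of $P$, one obtains the equality of value sets
\[
\{|\C_P(x)| : x \in P\} = \{|\C_G(g)|_p : g \in G \text{ a $p$-element}\},
\]
the right-hand side being directly readable from $X(G)$.

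Combining these two ingredients, $P$ is of maximal class if and only if either $|G|_p \leq p^2$ or $p^2$ occurs as a value $|\C_G(g)|_p$ for some $p$-element $g \in G$; both conditions are determined by $X(G)$, proving Theorem~A uniformly in $p$. The main obstacle, as I see it, is conceptual rather than computational: one must recognize that Blackburn's centralizer criterion is precisely tailored to the numerical invariant supplied by the character table, sidestepping the prime-specific ``$|P:P'| = 4$'' reduction used for $p=2$ in \cite{NaSaTi}. Once this identification is made, the verification of the required Sylow lemma and the numerical extraction from $X(G)$ are short and routine.
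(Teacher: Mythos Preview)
Your argument contains a genuine gap. The assertion that $\C_P(g)$ is a Sylow $p$-subgroup of $\C_G(g)$ for every $g\in P$ is false, and so is the supporting claim that any two Sylow $p$-subgroups of $G$ containing $g$ are conjugate by an element of $\C_G(g)$. For a small counterexample take $G=S_4$, $p=2$ and $g=(12)(34)$: all three Sylow $2$-subgroups contain $g$ (since $V_4\unlhd S_4$), but $|\C_G(g)|=8$, so the three Sylows cannot form a single $\C_G(g)$-orbit. Concretely, for $P=\langle(1234),(13)\rangle$ one has $\C_P(g)=V_4$ of order $4$, whereas $|\C_G(g)|_2=8$. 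Thus the two value sets you write down are \emph{not} equal in general; only the inequality $|\C_P(g)|\le|\C_G(g)|_p$ holds.

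One implication of your criterion survives: if $|P|\ge p^3$ and some $p$-element $x$ satisfies $|\C_G(x)|_p=p^2$, then for any Sylow $P\ni x$ one gets $|\C_P(x)|\le p^2$, and $|\C_P(x)|=p$ would force $x\in\Z(P)$ and $|P|=p$, so $|\C_P(x)|=p^2$ and $P$ has maximal class. The paper uses exactly this direction. The converse, however, genuinely fails: in $G=\SL(2,9)$ the Sylow $2$-subgroup is $Q_{16}$, of maximal class, yet the unique involution is central (centralizer of $2$-part $16$) and every element of order $4$ or $8$ is regular semisimple with centralizer a torus of order $8$, so no $2$-element has $|\C_G(x)|_2=4$. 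Handling precisely these quasisimple exceptions is the content of Theorem~B, whose proof relies on the Grazian--Parker classification of fusion systems on maximal-class $p$-groups; the paper then finishes Theorem~A by recognising the simple composition factor from $X(G)$. Your proposed shortcut therefore misses the hard case rather than sidestepping it.
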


It is known that $X(G)$ determines the isomorphism types of abelian Sylow subgroups. Of course we cannot expect this for maximal class Sylow subgroups as $X(D_8)=X(Q_8)$. Perhaps surprisingly, $X(G)$ does not even determine $X(P)$. Counterexamples for $p=3$ arise as semidirect products of non-equivalent faithful actions of $\SL(2,3)$ on $C_9\times C_9$ (the groups are $\mathtt{SmallGroup}(2^33^5,a)$ with $a\in\{2289,2290\}$ in GAP~\cite{GAP48}). Here $P$ indeed has maximal class. This is related to \cite[Question~E]{NRS}.

We obtain Theorem~A as a consequence of the following structure description, which might be of independent interest.

\begin{ThmB}
Let $G$ be a finite group with a Sylow $p$-subgroup $P$ of maximal class. Suppose that $\pcore_{p'}(G)=1$ and $\pcore^{p'}(G)=G$. Then one of the following holds:
\begin{enumerate}[(i)]
\item There exists $x\in P$ such that $|\C_G(x)|_p=p^2$.
\item $G$ is quasisimple and $|\Z(G)|\le p$.
\end{enumerate}
\end{ThmB}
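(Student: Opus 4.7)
The bound $|\Z(G)|\le p$ in (ii) is automatic whenever $|P|\ge p^3$: from $\pcore_{p'}(\Z(G))\le\pcore_{p'}(G)=1$ one gets that $\Z(G)$ is a $p$-group, so $\Z(G)\le\Z(P)$, and $|\Z(P)|=p$ by maximal class. Thus the real content of (ii) is that $G$ is quasisimple. The plan is to analyse the generalised Fitting subgroup $F^*(G)=\pcore_p(G)\E(G)$ (using $\pcore_{p'}(G)=1$) and show that failure of (i) forces $G=F^*(G)$ to be a single quasisimple component, with the final step invoking the classification of finite simple groups.

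\textbf{Small cases and the reduction to $\pcore_p(G)=1$.} For $|P|=p^2$ the element $x=1$ already gives $|\C_G(x)|_p=|P|=p^2$, and for $|P|=p^3$ any non-central $x\in P$ has $|\C_P(x)|=p^2$; picking such $x$ to be fully centralised in $P$ with respect to $G$-fusion (via Sylow's theorem applied in $\C_G(x)$) yields $\C_P(x)\in\Syl_p(\C_G(x))$ and hence (i). Now suppose $|P|\ge p^4$. If $\pcore_p(G)\ne 1$, then $\pcore_p(G)\cap\Z(P)\ne 1$, forcing $\Z(P)\le\Z(\pcore_p(G))\trianglelefteq G$. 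Viewing $V=\Omega_1\Z(\pcore_p(G))$ as an $\FF_p G$-module and exploiting $\pcore^{p'}(G)=G$, I would show that $\Z(P)$ is itself normal in $G$, hence central. One can then pass to $\overline G=G/\Z(P)$, where $\overline P=P/\Z(P)$ still has maximal class (of order $p^{n-1}$), and apply induction on $|G|$ after verifying that conclusions (i) and (ii) transfer correctly across the quotient.

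\textbf{The layer and CFSG.} In the remaining case $\pcore_p(G)=1$ we have $F^*(G)=\E(G)$, a central product of components permuted by $G$. I would argue that there is a single component $L$: two or more components with $p$-divisible orders (which must exist, since a $p'$-component would lie in $\pcore_{p'}(G)=1$) would yield two non-trivial commuting subgroups of $P$ containing $\Z(P)$, contradicting the indecomposability of a maximal class $p$-group of order $\ge p^4$. Then $G/\Z(L)\hookrightarrow\Aut(L)$, and if $G=L$ we are in case (ii). Otherwise I invoke CFSG and the classification of simple groups with Sylow $p$-subgroup of maximal class---for $p=2$ the dihedral, semi-dihedral, and generalised quaternion cases of Gorenstein--Walter and Alperin--Brauer--Gorenstein; for odd $p$ a shorter analogous list---and verify in every outer extension compatible with our hypotheses that some $x\in P$ satisfies $|\C_G(x)|_p=p^2$.

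\textbf{Main obstacle.} The principal difficulty is the CFSG case analysis: for every admissible almost-(quasi)simple extension one must exhibit a $p$-element whose centraliser has $p$-part exactly $p^2$. Two secondary technical hurdles are verifying $\Z(P)\trianglelefteq G$ in the $\pcore_p(G)\ne 1$ reduction (requiring delicate use of the $G$-module structure on $\Omega_1\Z(\pcore_p(G))$), and ensuring that the single-component conclusion survives the case where $G$ transitively permutes multiple components of $\E(G)$.
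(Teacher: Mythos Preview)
Your route is genuinely different from the paper's, and it has real gaps that are not just technicalities.

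The paper neither inducts on $|G|$ nor runs a CFSG case analysis. Instead, assuming $G$ is not simple and $|P|\ge p^3$, it picks a \emph{maximal} normal subgroup $N\lhd G$; the hypotheses $\pcore_{p'}(G)=1$ and $\pcore^{p'}(G)=G$ force $1<|N|_p<|P|$. If $|N|_p\ge p^2$, a preparatory lemma (\autoref{normal}) produces $x\in P$ with $|\C_G(x)|_p=p^2$; this lemma is the real engine and rests on Grazian--Parker's result that every essential subgroup in a fusion system on a maximal class $p$-group is a ``pearl'' of order $p^2$ or $p^3$. If $|N|_p=p$, then $P\cap N\le P'$ and Tate's theorem makes $N$ $p$-nilpotent, so $|N|=p$, $N\le\Z(G)$, and $G/N$ simple gives $G$ quasisimple. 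No quotient induction, no almost simple case-check.

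Your reduction when $\pcore_p(G)\ne1$ breaks at two points. First, the assertion $\Z(P)\unlhd G$ is immediate only when $|\pcore_p(G)|=p$ (then $\pcore_p(G)=\Z(P)$); when $|\pcore_p(G)|\ge p^2$, the line $\Z(P)=V^P$ inside $V=\Omega_1\Z(\pcore_p(G))$ need not be $G$-stable --- the hypothesis $G=\pcore^{p'}(G)$ only says $G$ is generated by Sylow $p$-subgroups, each fixing its \emph{own} conjugate line $\Z(P)^g$. Second, even granting $\Z(P)\le\Z(G)$, conclusions do not lift: from $|\C_{\overline G}(\overline x)|_p=p^2$ you only get $|\C_G(x)|_p\le p^3$ for a preimage $x$, with no mechanism to exclude $p^3$; and if $\overline G$ is quasisimple with $|\Z(\overline G)|=p$, then $G$ is perfect with $\Z(G)=\Z(P)$ but $G/\Z(G)=\overline G$ is \emph{not} simple, so $G$ fails (ii) while you have no argument for (i). Finally, in your $\pcore_p(G)=1$ branch, the promised CFSG check over almost simple groups with maximal class Sylow $p$-subgroup is, for odd $p$, not a short list one can reference --- it is essentially the problem itself. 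The fusion-system lemma is precisely what lets the paper replace both the induction and the case analysis with a two-line argument.
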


The proof uses recent work by Grazian--Parker~\cite{GrazianParker} on fusion systems and is given in \autoref{secmax}.

In the final part of the paper we study groups with minimal non-abelian Sylow $p$-subgroups $P$, i.\,e. $P$ is non-abelian, but every proper subgroup of $P$ is abelian. It is easy to see that this happens if and only if $|P:\Z(P)|=|P:\Phi(P)|=p^2$ (see \autoref{lemmna} below). Refining \cite[Theorem~7.5]{NS22}, we obtain in \autoref{secmna} the following description:

\begin{ThmC}
Let $G$ be a finite group with a minimal non-abelian Sylow $p$-subgroup $P$ and $\pcore_{p'}(G)=1$. Then one of the following holds:
\begin{enumerate}[(i)]
\item $p=2$, $P\in\{D_8,Q_8\}$ and $\pcore^{2'}(G)\in\{\SL(2,q),\PSL(2,q'),A_7\}$ where $q\equiv \pm 3\pmod{8}$ and $q'\equiv \pm 7\pmod{16}$.
\item $|P|=p^3$ and $\exp(P)=p>2$.
\item $G=P\rtimes Q$ where $Q\le\GL(2,p)$.
\item $p> 2$, $\pcore^{p'}(G)=S\rtimes C_{p^a}$ where $S$ is a simple group of Lie type with cyclic Sylow $p$-subgroups. The image of $C_{p^a}$ in $\Out(S)$ has order $p$.

\item $p=2$ and $G=\PSL(2,q^f)\rtimes C_{2^ad}$ where $q$ is a prime, $q^f\equiv\pm3\pmod{8}$ and $d\mid f$. Moreover, $C_{2^a}$ acts as a diagonal automorphism of order $2$ on $\PSL(2,q^f)$ and $C_d$ induces a field automorphism of order $d$.

\item $p=3$ and $\pcore^{3'}(G)=\PSL^\epsilon(3,q^f)\rtimes C_{3^a}$ where $\epsilon=\pm1$, $q$ is a prime, $(q^f-\epsilon)_3=3$ and $G/\pcore^{3'}(G)\le C_f\times C_2$.
\end{enumerate}
\end{ThmC}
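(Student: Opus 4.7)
The plan is to refine \cite[Theorem~7.5]{NS22} by analyzing the generalized Fitting subgroup $\F^*(G) = \pcore_p(G)\E(G)$, which is well-controlled since $\pcore_{p'}(G) = 1$. I would start by recalling Rédei's classification of minimal non-abelian $p$-groups: aside from $Q_8$, every such $P$ is metacyclic of the form $\langle a,b \mid a^{p^m} = b^{p^n} = 1,\ [a,b] = a^{p^{m-1}}\rangle$ or non-metacyclic of the form $\langle a,b,c \mid a^{p^m} = b^{p^n} = c^p = 1,\ [a,b] = c,\ c \in \Z(P)\rangle$. In all cases $P$ has nilpotency class $2$, $|P'| = p$, and $P/\Phi(P) \cong C_p \times C_p$.

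The first dichotomy is whether $\E(G)$ is trivial. If $\E(G) = 1$, then $\F^*(G) = \pcore_p(G)$ and $\C_G(\pcore_p(G)) \le \pcore_p(G)$, making $G$ $p$-constrained. Since $\pcore_p(G) \trianglelefteq P$ and $P$ is minimal non-abelian, a short argument forces $\pcore_p(G) = P$ (the abelian alternative is incompatible with the self-centralizing property). Then $G/P$ acts faithfully on $P/\Phi(P) \cong C_p^2$ and, by Schur--Zassenhaus, $G = P \rtimes Q$ with $Q \le \GL(2,p)$, giving case (iii). When $\E(G) \neq 1$, the components of $G$ are quasisimple groups whose Sylow $p$-subgroups embed into $P$; minimal non-abelianness imposes that at most one component $L$ has non-abelian Sylow $p$-subgroup (in which case it essentially coincides with $P$), while any further components have cyclic Sylow $p$-subgroups lying in $\Z(P)$.

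I would then invoke CFSG to identify $L$. For $p = 2$ with $P \in \{D_8, Q_8\}$, the Gorenstein--Walter theorem and the Brauer--Suzuki theorem yield $\PSL(2,q')$ with $q' \equiv \pm 7 \pmod{16}$, $A_7$, or $\SL(2,q)$ with $q \equiv \pm 3 \pmod{8}$, producing case (i). For odd $p$ with $|P| = p^3$ of exponent $p$, case (ii) arises at once. For all remaining (larger) $P$, the candidate $L$ must be of Lie type (the only simple groups with cyclic Sylow $p$-subgroup of order greater than $p$), and one then analyzes the action of diagonal, field, and graph automorphisms on a Sylow $p$-torus. This analysis separates case (iv) (cyclic outer extension of $p$-power order), case (v) ($L = \PSL(2,q^f)$ with diagonal-plus-field extension when $p=2$), and case (vi) ($L = \PSL^\epsilon(3,q^f)$ when $p=3$).

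The main obstacle I expect is the explicit automorphism analysis in cases (iv)--(vi). For each Lie-type $L$ with cyclic Sylow $p$-subgroup, one must determine precisely which extensions by field, diagonal, and graph automorphisms yield a \emph{minimal} non-abelian Sylow $p$-subgroup of $G$ rather than merely a $2$-generated one. This requires delicate commutator computations on a Sylow $p$-torus together with arithmetic conditions on $q$, $f$, and the $p$-part of $q^f \mp 1$. Excluding extensions that produce non-minimal (or degenerate) Sylow subgroups, while simultaneously verifying that no valid configuration is missed, is the technically hardest step.
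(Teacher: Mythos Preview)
Your dichotomy on $\E(G)$ contains a genuine error. You claim that when $\E(G)=1$ ``a short argument forces $\pcore_p(G)=P$ (the abelian alternative is incompatible with the self-centralizing property),'' and hence case~(iii) holds. This is false: an abelian maximal subgroup $Q$ of $P$ can perfectly well be self-centralizing in $G$. Concretely, take $G=A_4\rtimes C_{2^a}$ (with $C_{2^a}$ acting as a transposition in $S_4$) or $G=M_9\rtimes C_{3^a}$. These groups are solvable, so $\E(G)=1$, yet $\pcore_p(G)\cong C_p^2$ is a proper subgroup of $P\cong\Delta(a,1)$, and $\C_G(\pcore_p(G))=\pcore_p(G)$. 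These examples do \emph{not} fall under (iii); they are precisely the degenerate instances of (v) and (vi) (recall $A_4\cong\PSL(2,3)$ and $M_9\cong\PSU(3,2)$). So your $p$-constrained branch is incomplete: you must also treat the case $\pcore_p(G)<P$, and doing so is not a ``short argument.''

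The paper handles this missing case, and more generally organizes the entire non-(iii) analysis, via the classification of saturated fusion systems on minimal non-abelian $p$-groups (its \autoref{thmfusion}). That classification shows in particular that for $|P|\ge p^4$ the only non-controlled fusion systems occur on $\Delta(a,1)$ with $p\in\{2,3\}$, and the model theorem then pins down the $p$-constrained groups realizing them. The same fusion-system input is reused when a simple component $S$ has \emph{non-cyclic} abelian Sylow $p$-subgroup: one needs to know that $\N_G(Q)/\C_G(Q)\le\GL(2,p)$ acts in a prescribed way, which again comes from \autoref{thmfusion}. Your proposal replaces all of this by ``delicate commutator computations on a Sylow $p$-torus,'' but without the fusion-system classification (or an equivalent local analysis) you have no mechanism to exclude the configurations you have overlooked, nor to rule out multiple components (the paper's Case~2 uses Gross's theorem nontrivially here, which you also skip).
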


Here, $\PSL^\epsilon$ stands for $\PSL$ if $\epsilon=1$ and $\PSU$ otherwise.
Again the proof is based on the classification of the corresponding fusion systems. To show that Case~\eqref{meta2} in Theorem~C occurs for all odd primes $p$, we will exhibit appropriate examples after the proof.

\begin{CorD}
The character table of a finite group $G$ determines whether $G$ has minimal non-abelian Sylow $p$-subgroups.
\end{CorD}

\section{\texorpdfstring{$2$}{2}-generated Sylow subgroups}\label{sec2gen}

In the following $G$ will always denote a finite group. The exponent of $G$ is denoted by $\exp(G)$. The core of a subgroup $H\le G$ is defined by $\core_G(H):=\bigcap_{g\in G}gHg^{-1}\unlhd G$.
For $x,y\in G$ let $[x,y]:=xyx^{-1}y^{-1}$.
The Fitting subgroup and the generalized Fitting subgroup of $G$ are denoted by $\F(G)$ and $\F^*(G)=\F(G)*\E(G)$ respectively.
For $g\in G$ and $\chi\in\Irr(G)$ let
\begin{align*}
\QQ(g)&:=\QQ(\chi(g):\chi\in\Irr(G)),\\
\QQ(\chi)&:=\QQ(\chi(g):g\in G).
\end{align*}
It is well-known that $\QQ(\chi)$ lies in the cyclotomic field $\QQ_n$ where $n=|G|$. Let $f_\chi$ be the smallest positive integer such that $\QQ(\chi)\subseteq\QQ_{f_\chi}$ ($f_\chi$ is called the \emph{Feit number} in \cite{Nav2}). Let $\Irr_{p'}(G):=\{\chi\in\Irr(G):p\nmid \chi(1)\}$ as usual. The $p$-part and the $p'$-part of an integer $n$ are denoted by $n_p$ and $n_{p'}$ respectively.

Our first lemma is applied multiple times throughout the paper.

\begin{Lem}\label{lemabel}
Let $A$ be an abelian normal subgroup of $G$ such that $G=\langle x\rangle A$ for some $x\in G$. Then the map $A\to G'$, $a\mapsto[x,a]$ is an epimorphism with kernel $\C_A(x)$. In particular, $|G'|=|A/\C_A(x)|$.
\end{Lem}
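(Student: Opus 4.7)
The plan is to break the statement into three pieces: the map is a well-defined homomorphism; its kernel is $\C_A(x)$; and its image is all of $G'$. The first two are short, while surjectivity is the point that needs a small argument.

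First I would observe that since $A\unlhd G$ and $A$ is abelian, the commutator $[x,a]=xax^{-1}a^{-1}$ lies in $A$ for every $a\in A$, so the map lands inside $A$. Using the standard identity $[x,ab]=[x,a]\cdot a[x,b]a^{-1}$ together with the fact that $a$ centralizes $[x,b]\in A$, this simplifies to $[x,ab]=[x,a][x,b]$, so the map is a group homomorphism $A\to A$. Its kernel is $\{a\in A:xax^{-1}=a\}=\C_A(x)$, by definition.

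For surjectivity onto $G'$, let $K:=\{[x,a]:a\in A\}$, a subgroup of $A$ by what we just showed. Clearly $K\subseteq G'$. For the reverse inclusion I would show $K\unlhd G$; since $G=\langle x\rangle A$ it suffices to check closure under conjugation by elements of $A$ and by $x$. Conjugation by $a'\in A$ fixes $K$ pointwise because $K\subseteq A$ and $A$ is abelian. Conjugation by $x$ gives
\[
x[x,a]x^{-1}=[x,\,xax^{-1}]\in K,
\]
since $xax^{-1}\in A$. Hence $K$ is normal. In the quotient $G/K$, the image of $x$ commutes with the image of every $a\in A$, and as $G/K$ is generated by these images it is abelian. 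Therefore $G'\le K$, so $K=G'$.

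Combining the three pieces gives the homomorphism $A\to G'$, $a\mapsto[x,a]$, is surjective with kernel $\C_A(x)$, and the index formula $|G'|=|A/\C_A(x)|$ follows from the first isomorphism theorem. I do not expect any real obstacle; the only point requiring care is verifying the image is all of $G'$, which is handled by the normality argument above (equivalently, one could observe directly that every commutator $[x^m a,x^n b]$ in $G$ lies in $K$ using abelianness of $A$ and the fact that $\phi\colon a\mapsto xax^{-1}$ is an automorphism of $A$).
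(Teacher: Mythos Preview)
Your proof is correct and complete. The paper itself does not give a proof of this lemma but simply cites \cite[Lemma~4.6]{IsaacsGroup}; your argument is a self-contained version of that standard fact, and in particular the normality-of-$K$ argument you use to get $G'\le K$ is the expected one.
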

\begin{proof}
See \cite[Lemma~4.6]{IsaacsGroup}.
\end{proof}

To get from $P'$ to $\Phi(P)$ we need the following variant.

\begin{Lem}\label{lemPhi}
Let $P$ be a $p$-group with a proper normal subgroup $Q$ and $x\in P$ such that $P=\langle x\rangle Q$ and $\langle x\rangle\cap Q\le P'$. Then $|P:\Phi(P)|=p^2$ if and only if $|\C_{Q/\Phi(Q)}(x)|=p$.
\end{Lem}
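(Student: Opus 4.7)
The plan is to reduce modulo $\Phi(Q)$ so that $Q$ becomes elementary abelian, then compute $\Phi(\bar P)$ explicitly and apply \autoref{lemabel}. Note that $\Phi(Q) = Q'Q^p$ is characteristic in $Q$ hence normal in $P$, and $\Phi(Q)\le\Phi(P)$, so $\Phi(P/\Phi(Q)) = \Phi(P)/\Phi(Q)$. Write $\bar P := P/\Phi(Q)$ and $\bar Q := Q/\Phi(Q)$; then $\bar Q$ is elementary abelian and normal in $\bar P$, $\bar P = \langle\bar x\rangle\bar Q$, and $\bar P'\le\bar Q$ because $\bar P/\bar Q$ is cyclic. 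The goal becomes to show $|\bar P:\Phi(\bar P)| = p\cdot|\C_{\bar Q}(\bar x)|$.

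Next I would pin down $\Phi(\bar P) = \bar P'\bar P^p$. Every element of the abelian group $\bar P/\bar P'$ has the form $(\bar x\bar P')^i(\bar q\bar P')$ with $\bar q\in\bar Q$, and its $p$-th power is $(\bar x^p\bar P')^i$ since $\bar Q$ is elementary abelian. Hence $\Phi(\bar P) = \bar P'\langle\bar x^p\rangle$. The hypothesis $\langle x\rangle\cap Q\le P'$ descends to $\langle\bar x\rangle\cap\bar Q\le\bar P'$ (if $x^k\in Q$ then $x^k\in P'$, whose image lies in $\bar P'$), so combined with $\bar P'\le\bar Q$ this yields
\[
\bar Q\cap\Phi(\bar P) \;=\; \bar P'(\bar Q\cap\langle\bar x^p\rangle) \;=\; \bar P'.
\]
Consequently $\bar Q/\bar P'$ embeds into $\bar P/\Phi(\bar P)$, and the cokernel is cyclic of order dividing $p$, generated by the image of $\bar x$. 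It cannot be trivial, as that would force $\bar P = \bar Q\Phi(\bar P)$, and hence $\bar P = \bar Q$ by the non-generator property of the Frattini subgroup, contradicting the properness of $Q$ in $P$. Therefore $|P:\Phi(P)| = |\bar P:\Phi(\bar P)| = p\cdot|\bar Q:\bar P'|$.

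To finish, I would apply \autoref{lemabel} to $\bar P = \langle\bar x\rangle\bar Q$ with abelian normal subgroup $\bar Q$, obtaining $|\bar P'| = |\bar Q:\C_{\bar Q}(\bar x)|$ and hence $|\bar Q:\bar P'| = |\C_{Q/\Phi(Q)}(x)|$. Combining, $|P:\Phi(P)| = p\cdot|\C_{Q/\Phi(Q)}(x)|$, which equals $p^2$ precisely when $|\C_{Q/\Phi(Q)}(x)| = p$. The main technical step is the identification $\bar Q\cap\Phi(\bar P) = \bar P'$: this is exactly where the hypothesis $\langle x\rangle\cap Q\le P'$ gets used, to block any contribution from $\langle\bar x^p\rangle$ to $\bar Q$. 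Without that input, $\bar Q\cap\Phi(\bar P)$ could be strictly larger than $\bar P'$ and the clean formula $|P:\Phi(P)| = p\cdot|\C_{Q/\Phi(Q)}(x)|$ would fail.
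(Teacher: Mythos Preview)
Your proof is correct and follows essentially the same route as the paper's: both arguments identify $\Phi(P)\cap Q=P'\Phi(Q)$ (your equation $\bar Q\cap\Phi(\bar P)=\bar P'$ is exactly this statement modulo $\Phi(Q)$), use the hypothesis $\langle x\rangle\cap Q\le P'$ at precisely that step via Dedekind's modular law, and then apply \autoref{lemabel} to $P/\Phi(Q)$. The only cosmetic difference is that you pass to $\bar P=P/\Phi(Q)$ at the outset and compute $\Phi(\bar P)=\bar P'\langle\bar x^p\rangle$ there, whereas the paper works in $P$ and writes $\Phi(P)=P'\Phi(Q)\langle x^p\rangle$ before intersecting with $Q$; the computations are line-by-line equivalent.
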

\begin{proof}
Since $\langle x\rangle\cap Q\le P'\le\Phi(P)$ and $Q<P$, we have
\[P/\Phi(P)=Q\Phi(P)/\Phi(P)\times \langle x\rangle\Phi(P)/\Phi(P)\cong Q/(Q\cap\Phi(P))\times C_p.\]
Moreover,
\[\Phi(P)\cap Q=P'\Phi(Q)\langle x^p\rangle\cap Q=P'\Phi(Q)(\langle x^p\rangle\cap Q)=P'\Phi(Q).\]
Now $|P:\Phi(P)|=p^2$ if and only if
\[|Q/\Phi(Q):(P/\Phi(Q))'|=|Q:P'\Phi(Q)|=p.\]
By \autoref{lemabel} applied to $Q/\Phi(Q)\unlhd P/\Phi(Q)$, this is equivalent to $|\C_{Q/\Phi(Q)}(x)|=p$.
\end{proof}

The next result is a variation of \cite[Theorem~6.1]{NS22}.

\begin{Lem}\label{core}
Let $G$ be a finite group with Sylow $p$-subgroup $P$ and $\pcore_{p'}(G)=1$. Then
\[
K:=\bigcap_{\substack{\chi\in\Irr_{p'}(G)\\p^2\,\nmid\, f_\chi}}\Ker(\chi)=\core_G(\Phi(P)).
\]
\end{Lem}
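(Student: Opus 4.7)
The strategy is to adapt the proof of \cite[Theorem~6.1]{NS22}, which treats the analogous intersection with $P'$ in place of $\Phi(P)$. Set $N := \core_G(\Phi(P))$, and prove $K = N$ by the two inclusions $N \subseteq K$ and $K \subseteq N$.

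For the inclusion $N \subseteq K$, pick $\chi \in \Irr_{p'}(G)$ with $p^2 \nmid f_\chi$. Since $N \unlhd G$ is a $p$-group and $\chi(1)$ is coprime to $p$, Clifford's theorem forces every irreducible constituent of $\chi_N$ to be a linear character $\theta$ of $N$; after $G$-conjugation we may assume $P \le I_G(\theta)$. The Feit number condition translates via Galois theory into a constraint on a linear character naturally associated to $\chi$ on $P$ --- essentially the canonical extension of (some $G$-conjugate of) $\theta$ to $P$, or the ``Navarro--Tiep linear label'' obtained from the Galois--McKay setup. Concretely, the hypothesis $p^2 \nmid f_\chi$ forces this associated linear character of $P$ to have order dividing $p$. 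A linear character of $P$ of order at most $p$ is automatically trivial on $\Phi(P) = [P,P] P^p$, hence on $N \le \Phi(P)$, and so $\theta = 1_N$ and $N \le \Ker(\chi)$.

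For the reverse inclusion $K \subseteq N$, we pass to the quotient $\bar G := G/N$ with Sylow $p$-subgroup $\bar P := P/N$. Since $N \le \Phi(P)$, we have $\Phi(\bar P) = \Phi(P)/N$, whence $\core_{\bar G}(\Phi(\bar P)) = 1$; the hypothesis $\pcore_{p'}(G) = 1$ is inherited by $\bar G$ because $N$ is a $p$-group. By the first inclusion already established, every $\chi \in \Irr_{p'}(G)$ with $p^2 \nmid f_\chi$ factors through $\bar G$, so it suffices to show $K(\bar G) = 1$. Given any $1 \ne \bar g \in \bar G$, we must produce $\chi \in \Irr_{p'}(\bar G)$ with $p^2 \nmid f_\chi$ and $\chi(\bar g) \ne \chi(1)$. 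The combined hypotheses on $\bar G$ restrict its structure — in particular the generalized Fitting subgroup $\F^*(\bar G)$ cannot be a normal $p$-subgroup sitting inside $\Phi(\bar P)$ — leaving enough room to construct such a character, in parallel with the corresponding step in \cite[Theorem~6.1]{NS22}.

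The main obstacle is the first inclusion, specifically the step that promotes the purely arithmetical condition $p^2 \nmid f_\chi$ into the structural statement that the constituent $\theta$ of $\chi_N$ is trivial. The underlying principle — that the $p$-part of a $p'$-degree character's Feit number is controlled by the order of a naturally associated linear character of the Sylow $p$-subgroup — is a tool from the Galois--McKay circle of ideas and must be invoked carefully; once available, the residual argument reduces to the elementary observation that a linear character of $P$ of order at most $p$ is trivial on $\Phi(P)$.
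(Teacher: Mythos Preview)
Your first inclusion ends in the right place but starts unnecessarily indirectly. There is no need to restrict $\chi$ to $N$ and then invoke an unnamed ``Galois--McKay'' device to climb back up to $P$: simply restrict $\chi$ to $P$. The group $\mathcal{G}:=\Gal(\QQ_{|G|}\mid\QQ_{p|G|_{p'}})$ is a $p$-group that permutes the irreducible constituents of $\chi_P$ (because $\QQ(\chi)\subseteq\QQ_{p|G|_{p'}}$), and since $p\nmid\chi(1)$ some linear constituent $\lambda$ is fixed, i.e.\ has $\QQ(\lambda)\subseteq\QQ_p$ and hence order at most $p$. Then $\Phi(P)\le\Ker(\lambda)$, so $N\le\Ker(\lambda)$, and Clifford gives $N\le\Ker(\chi)$. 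This is an elementary counting argument; no McKay-type machinery is required, and the ``canonical extension'' you allude to is never needed.

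The genuine gap is in your second inclusion. After reducing to $\bar G$ you assert only that the hypotheses leave ``enough room to construct such a character,'' with no construction offered and no structural input beyond a remark about $\F^*(\bar G)$. The paper's argument is quite different and does not proceed element by element. It runs the Galois orbit argument in the dual direction: for each $\lambda\in\Irr(P/\Phi(P))$ one lets $\mathcal{G}$ act on the constituents of $\lambda^G$; since $\lambda^G(1)=|G:P|$ is prime to $p$, some constituent $\chi\in\Irr_{p'}(G\mid\lambda)$ is $\mathcal{G}$-fixed, hence $p^2\nmid f_\chi$. Then $K\le\Ker(\chi)$ by definition, and since $\lambda$ occurs in $\chi_P$ one gets $P\cap K\le\Ker(\lambda)$. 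Varying $\lambda$ yields $P\cap K\le\Phi(P)$. The decisive step you are missing is Tate's theorem: from $P\cap K\le\Phi(P)$ one concludes that $K$ is $p$-nilpotent, hence (as $\pcore_{p'}(G)=1$) a $p$-group, so $K=P\cap K\le\Phi(P)$ and $K\le N$. Without Tate or an equivalent transfer argument there is no way to pass from information about $P\cap K$ to information about all of $K$, and your proposal supplies no substitute.
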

\begin{proof}
Let $n:=|G|$. If $n_p=1$, then the claim holds since $\bigcap_{\chi\in\Irr(G)}\Ker(\chi)=1=P$. Thus, let $n_p\ne 1$. Then $\mathcal{G}:=\Gal(\QQ_n|\QQ_{pn_{p'}})$ is a $p$-group. Let $N:=\core_G(\Phi(P))$ and $\chi\in\Irr_{p'}(G)$ with $p^2\nmid f_\chi$. Since $\QQ(\chi_P)\subseteq\QQ(\chi)\subseteq\QQ_{pn_{p'}}$, $\mathcal{G}$ permutes the irreducible constituents of $\chi_P$. Since the sizes of the $\mathcal{G}$-orbits are $p$-powers and $p\nmid \chi(1)$, there must be a linear constituent $\lambda\in\Irr(P|\chi)$ fixed by $\mathcal{G}$, i.\,e. $\QQ(\lambda)\subseteq\QQ_p$. It follows that $N\subseteq\Phi(P)\subseteq\Ker(\lambda)$. By Clifford theory, $\chi_N$ is a sum of conjugates of $\lambda_N$. Hence, $N\subseteq\Ker(\chi)$. This shows that $N\le K$.

Now let $\lambda\in\Irr(P/\Phi(P))$. This time, $\mathcal{G}$ acts on the irreducible constituents of $\lambda^G$. Since $p\nmid |G:P|=\lambda^G(1)$, there must be a constituent $\chi\in\Irr_{p'}(G|\lambda)$ fixed by $\mathcal{G}$, i.\,e. $p^2\nmid f_\chi$. This implies $\chi_{P\cap K}=\chi(1)1_{P\cap K}$. On the other hand, $\lambda_{P\cap K}$ is a constituent of $\chi_{P\cap K}$. Therefore, $P\cap K\subseteq\Ker(\lambda)$. Since $\lambda\in\Irr(P/\Phi(P))$ was arbitrary, we obtain $P\cap K\le\Phi(P)$.
Now Tate's theorem (see \cite[Satz~IV.4.7]{Huppert}) yields that $K$ is $p$-nilpotent. By hypothesis, $\pcore_{p'}(K)\le\pcore_{p'}(G)=1$ and $K$ is a $p$-group. Finally, $K\le \pcore_p(G)\cap K\le P\cap K\le\Phi(P)$ and $K\le N$.
\end{proof}

We mention that the characters $\chi$ with $p^2\nmid f_\chi$ are precisely the \emph{almost $p$-rational} characters introduced in \cite{HMM}.
\autoref{core} allows to read off $K:=\core_G(\Phi(P))$ from the character table. Since $|P/K:\Phi(P/K)|=|P:\Phi(P)|$, it is therefore no loss to assume that $K=1$. The next theorem comes close to \cite[Theorem~3.1]{NS22}.

\begin{Thm}\label{2gen}
Let $G$ be a finite group with a non-abelian Sylow $p$-subgroup $P$ such that $|P:\Phi(P)|=p^2$ and $\pcore_{p'}(G)=1=\core_G(\Phi(P))$.
Then $\F^*(G)$ is the unique minimal normal subgroup of $G$ and $P\F^*(G)/\F^*(G)$ is cyclic.
If $\F^*(G)$ is non-abelian, then $P$ permutes the simple components of $\F^*(G)$ transitively. In particular, their number is a $p$-power in this case.
\end{Thm}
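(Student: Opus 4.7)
The plan is to study the images of normal subgroups of $G$ in the two-dimensional $\FF_p$-space $\bar P := P/\Phi(P)$, exploiting $\core_G(\Phi(P))=1$ and the non-abelianness of $P$.

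First, $\Phi(\pcore_p(G))$ is characteristic in $\pcore_p(G)\unlhd G$ and contained in $\Phi(P)$, so $\Phi(\pcore_p(G)) \le \core_G(\Phi(P)) = 1$ and $\pcore_p(G)$ is elementary abelian. The key reduction is: for every minimal normal subgroup $N$ of $G$, $N \cap P \not\le \Phi(P)$. For abelian $N \le \pcore_p(G) \le P$, the assumption $N \le \Phi(P)$ would give $N \le \core_G(\Phi(P)) = 1$. For non-abelian $N$, conjugating gives $N \cap Q \le \Phi(Q)$ for every Sylow subgroup $Q$, so Tate's theorem (as in the proof of \autoref{core}) makes $N$ $p$-nilpotent; combined with $\pcore_{p'}(N) \le \pcore_{p'}(G) = 1$ this would force $N$ to be a $p$-group, impossible for a non-abelian minimal normal.

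Hence $(N \cap P)\Phi(P)/\Phi(P)$ is a nonzero subspace of $\bar P$. A dimension-$2$ image would give $P = N \cap P$ by the Burnside basis theorem, so $P \le N$. For abelian $N$, this yields $P = \pcore_p(G)$, whence $\Phi(P) \unlhd G$ and $\Phi(P) = 1$, contradicting non-abelianness. For non-abelian $N = T_1 \times \cdots \times T_m$, the Sylow decomposition forces $P = \prod (P \cap T_i)$ with each $P \cap T_i$ a Sylow of $T_i$; then $|P:\Phi(P)|=p^2$ forces $m = 1$ (else $P$ is a product of cyclic groups, hence abelian), after which $\pcore_p(G) \cap N = 1$ gives $\pcore_p(G) = 1$ and $\C_G(N) \cap P = 1$ forces $\C_G(N) \le \pcore_{p'}(G) = 1$, so $\E(G) = N$ and $\F^*(G) = N$, with the conclusions holding trivially. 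Otherwise the image is one-dimensional, so $(N \cap P)\Phi(P)$ has index $p$ in $P$, making $P/(N \cap P)$ cyclic; since $N \le \F^*(G)$, the further quotient $P\F^*(G)/\F^*(G)$ is cyclic, proving the second claim.

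The hardest step is showing $\F^*(G)$ itself is minimal normal. Suppose $M, N$ are two distinct minimal normals; both have one-dimensional images in $\bar P$ by the above. If the images differ, they span $\bar P$, so $(M \cap P)(N \cap P)\Phi(P) = P$; as $(M \cap P)(N \cap P)$ is a subgroup (a direct product since $[M,N] \le M \cap N = 1$), the Burnside basis theorem yields $P = (M \cap P) \times (N \cap P)$, and $|P:\Phi(P)| = p^2$ then forces both factors to be cyclic, making $P$ abelian---a contradiction. If the images coincide, one uses $\Z(P) \le \Phi(P)$ (valid for non-abelian $2$-generated $p$-groups) together with the cyclicity of $P/\pcore_p(G)$ coming from the dimension-$1$ analysis: \autoref{lemabel} applied to $P = \langle x \rangle\pcore_p(G)$ controls $P'$ via the $x$-action on $\pcore_p(G)$, and \autoref{lemPhi} applied to $Q = \pcore_p(G)\Phi(P)$ extracts a contradiction from the combinatorics of $P$-fixed points on $M \oplus N$. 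Uniqueness then gives the dichotomy $\F^*(G) = \pcore_p(G)$ or $\F^*(G) = \E(G)$; in the latter non-abelian case, multiple $P$-orbits of components would produce a commuting pair of Sylow intersections with a two-dimensional image in $\bar P$, forcing $P$ abelian by the same argument, so $P$ permutes the components transitively and their number is a power of $p$.
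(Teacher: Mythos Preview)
Your overall strategy is sound and parallels the paper's, but three steps are either too vague to count as a proof or actually incorrect.

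\textbf{Uniqueness of the minimal normal subgroup.} The ``same images'' case is not proved: you invoke \autoref{lemabel} and \autoref{lemPhi} and speak of ``the combinatorics of $P$-fixed points on $M\oplus N$'' without saying what the contradiction is, and you implicitly assume $M,N\le\pcore_p(G)$ and that $P/\pcore_p(G)$ is cyclic, neither of which follows from what you have established (the dimension-$1$ analysis applies to \emph{minimal} normal subgroups, not to $\pcore_p(G)$ itself). In fact the case split is unnecessary: since each of $P/(P\cap M)$ and $P/(P\cap N)$ has Frattini quotient of order $p$, both are cyclic, and the embedding $P\hookrightarrow P/(P\cap M)\times P/(P\cap N)$ (kernel $(P\cap M)\cap(P\cap N)=1$) forces $P$ abelian. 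This is exactly the paper's argument, phrased as $G\hookrightarrow G/M\times G/N$.

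\textbf{Why $\F^*(G)$ equals the unique minimal normal subgroup $N$.} You assert ``uniqueness then gives the dichotomy $\F^*(G)=\pcore_p(G)$ or $\F^*(G)=\E(G)$'', but uniqueness of $N$ only gives $N\le\F^*(G)$; the reverse inclusion is the content. When $N$ is abelian the paper needs two nontrivial ingredients you omit: a Gasch\"utz complement argument to rule out $N<\F(G)$, and a Schur-multiplier argument (cyclic Sylow subgroups of $G/N$ force the $p$-part of the multiplier to vanish) to rule out $\E(G)\ne 1$.

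\textbf{Transitivity on components.} Your claim that two $P$-orbits would yield ``a commuting pair of Sylow intersections with a two-dimensional image in $\bar P$'' is false: both intersections lie inside $P\cap N$, whose image in $\bar P$ is one-dimensional by your own dimension-$1$ case. The paper's argument is genuinely more delicate: choosing $y\in Q_1\setminus\Phi(P)$ so that $P=\langle x,y\rangle$, one shows that a second orbit would force the remaining $Q_i$ to be a single cyclic group, whence $p>2$, and then Gross's theorem (an automorphism centralizing a Sylow $p$-subgroup is inner) yields a contradiction with the order of the inner automorphism induced by $x^{p^a}$. Your proposal does not touch this mechanism.
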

\begin{proof}
Let $N$ be a minimal normal subgroup of $G$. Then
\begin{align*}
|PN/N:\Phi(PN/N)|&=|P/P\cap N:\Phi(P/P\cap N)|=|P/P\cap N:\Phi(P)(P\cap N)/P\cap N|\\
&=|P:\Phi(P)(P\cap N)|\le|P:\Phi(P)|=p^2.
\end{align*}
Suppose first that $P\cap N\le\Phi(P)$. Then by Tate's theorem (see \cite[Satz~IV.4.7]{Huppert}), $N$ is a $p$-group and $N\le\Phi(P)$. This contradicts $\core_G(\Phi(P))=1$. Consequently, $|PN/N:\Phi(PN/N)|\le p$ and $PN/N$ is cyclic.
Let $M\ne N$ be another minimal normal subgroup of $G$. Then $G/N$ and similarly $G/M$ have cyclic Sylow $p$-subgroups.
Since $G$ is isomorphic to a subgroup of $G/M\times G/N$, $G$ has abelian Sylow $p$-subgroups, which we have excluded explicitly.
This shows that $N$ is the unique minimal normal subgroup.

Assume now that $N$ is non-abelian. Then $\F(G)\cap N=1$ implies $\F(G)=1=\Z(G)$ and $\F^*(G)=\E(G)=N$.
Write $N=T_1\times\ldots\times T_n$ with non-abelian simple groups $T_1\cong\ldots\cong T_n$.
If $P\le N$, then $n=1$ and $P$ certainly acts transitively on $\{T_1,\dots,T_n\}$. Hence, we may assume that $P\nsubseteq N$ and $n\ge 2$.
Let $Q_i:=P\cap T_i$ for $i=1,\ldots,n$. Let $x\in P$ such that $PN/N=\langle xN\rangle$.
Since $P\cap N\nsubseteq\Phi(P)$, there exists some $1\le i\le n$ with $Q_i\nsubseteq\Phi(P)$. Without loss of generality, let $i=1$. Choose $y\in Q_1\setminus\Phi(P)$. For all $j\in\ZZ$ we note that $xy^j\notin N\supseteq\Phi(P)$. Since $|P:\Phi(P)|=p^2$, it follows that $P=\langle x,y\rangle$. Without loss of generality, let $T_1,\ldots,T_k$ be the orbit of $T_1$ under $P$. Suppose by way of contradiction that $k<n$. Then $Q_1\ldots Q_k\unlhd P$ and $Q_{k+1}\times\ldots \times Q_n\le P/Q_1\ldots Q_k=\langle xQ_1\ldots Q_k\rangle$ is cyclic. This is only possible if $n=k+1$ and $Q_n$ is cyclic.
Moreover, $Q_n=\langle x^{p^a}z\rangle$ for some $a\ge 1$ and $z\in Q_1\cdots Q_k$. Since a non-abelian  simple group cannot have a cyclic Sylow $2$-subgroup, $p>2$. It follows from \cite[Theorem~A]{Gross} that $x$ induces an inner automorphism on $T_n$. This is impossible since $x^{p^a}$ induces an inner automorphism of order $|T_n|_p$. This contradiction shows that $P$ permutes the $T_i$ transitively.

Finally, assume that $N$ is elementary abelian. Since $\pcore_{p'}(G)=1$, we have $F:=\F(G)=\pcore_p(G)$. Suppose that $N<F$. Then $\Phi(F)\le\Phi(P)$ yields $\Phi(F)\le\core_G(\Phi(P))=1$, i.\,e. $F$ is elementary abelian.
Now the existence of an element of order $p$ in $P\setminus N$ implies the existence of a (cyclic) complement of $N$ in $P$.
By a theorem of Gaschütz (see \cite[Hauptsatz~I.17.4]{Huppert}), $N$ has a complement $K$ in $G$. Since $F$ centralizes $N$, we obtain $1\ne K\cap F\unlhd NK=G$. This contradicts the fact that $N$ is the unique minimal normal subgroup of $G$. Hence, $F=N$. Suppose that $\E(G)\ne 1$ and choose a central product $M\unlhd G$ of quasisimple components. Then $N\le\Z(M)$, because $1\ne N\cap M\unlhd G$. Since $M/N$ has cyclic Sylow $p$-subgroups, the order of the Schur multiplier of $M/N$ is not divisible by $p$. This contradicts $N\le\Z(M)$. We have therefore shown that $N=\F^*(G)$.
\end{proof}

In order to decide whether $|P:\Phi(P)|=p^2$, we may assume that the hypotheses of \autoref{2gen} are fulfilled. The situation now splits into two cases. When $\F^*(G)$ is abelian, the group $G$ is $p$-constrained (recall that in general a group $G$ is called $p$-constrained if $\C_{\overline{G}}(\pcore_p(\overline{G}))\le\pcore_p(\overline{G})$ where $\overline{G}:=G/\pcore_{p'}(G)$).
In this case we solve the problem completely.
To so do, we will use a result of Higman (see \cite[Corollary~7.18]{Nav2}) that allows to locate the $p$-elements in $X(G)$.

\begin{Cor}\label{corpsolv}
The character table of a $p$-constrained group $G$ determines whether a Sylow $p$-subgroup $P$ is generated by two elements.
\end{Cor}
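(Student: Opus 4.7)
The plan is to combine the reduction given by \autoref{core} with the structural information from \autoref{2gen} and Higman's theorem (\cite[Corollary~7.18]{Nav2}) which allows us to locate the $p$-elements in $X(G)$. Since $X(G)$ already determines whether $P$ is cyclic (\cite[Corollary~3.12]{Nav2}), we may assume $P$ is non-cyclic and focus on deciding whether $|P:\Phi(P)|=p^2$ rather than $|P:\Phi(P)|\ge p^3$.

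First, perform two standard reductions. The character table determines $\pcore_{p'}(G)$ (the largest normal $p'$-subgroup, recoverable from the normal-subgroup lattice together with the class sizes) and, by \autoref{core}, also $K:=\core_G(\Phi(P))$. Passing to the quotient $G/(K\pcore_{p'}(G))$ preserves both $p$-constrainedness and the Frattini index $|P:\Phi(P)|$, so we may assume $\pcore_{p'}(G)=1=\core_G(\Phi(P))$. Under these reductions $N:=\pcore_p(G)=\F^*(G)$ and $\C_G(N)=N$, while $\Phi(N)\unlhd G$ lies in $\Phi(P)$ and hence equals $1$, so $N$ is elementary abelian.

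If $P$ is abelian, the character table recovers the isomorphism type of $P$ and the rank-$2$ test is immediate. Assume $P$ non-abelian. By \autoref{2gen}, the equality $|P:\Phi(P)|=p^2$ forces $N$ to be the unique minimal normal subgroup of $G$ and $P/N$ to be cyclic; both of these necessary conditions are readable from $X(G)$ (the first via the normal-subgroup lattice, the second by applying \cite[Corollary~3.12]{Nav2} to $X(G/N)$). Assuming them, Higman's theorem singles out the conjugacy classes of $p$-elements in $G$, and $X(G/N)$ identifies among these a $p$-element $x$ whose image $xN$ generates $P/N$. Since $P=\langle x\rangle N$ with $N$ abelian, a short direct calculation gives $\C_P(x)=\langle x\rangle\C_N(x)$ with $\langle x\rangle\cap\C_N(x)=\langle x\rangle\cap N$, from which
\[|\C_N(x)|=\frac{|\C_G(x)|_p}{|\langle xN\rangle|},\]
a quantity fully determined by $X(G)$.

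It remains to apply \autoref{lemPhi} with $Q=N$ to conclude that $|P:\Phi(P)|=p^2$ is equivalent to $|\C_N(x)|=p$ (using $\Phi(N)=1$). The main obstacle here is securing the hypothesis $\langle x\rangle\cap N\le P'$ of \autoref{lemPhi}; I expect to handle this by selecting $x$ of minimal $p$-power order among the $p$-element lifts of a generator of $P/N$, and exploiting the irreducibility of $N$ as an $\FF_p[G/N]$-module (from \autoref{2gen}) together with $P'=[\langle x\rangle,N]$ to show that $x^{|xN|}\in [P,N]$ in the reduced setting. Once the hypothesis is in place, the equivalence delivered by \autoref{lemPhi} furnishes the desired character-theoretic test, completing the proof.
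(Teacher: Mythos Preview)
Your reductions and the computation of $|\C_N(x)|$ from $X(G)$ match the paper exactly; the divergence comes at the final step, and there the gap you yourself flag is genuine and your proposed fix does not close it. The criterion ``$|P:\Phi(P)|=p^2\iff|\C_N(x)|=p$'' is only valid once $\langle x\rangle\cap N\le P'$ is secured, and your two suggestions for securing it both fail. Choosing $x$ of minimal order does not help: already for $P=\Delta(2,1)$ with $N=\Omega_1(P)$ (so $a=1$, $|\C_N(x)|=p^2$, $|P:\Phi(P)|=p^2$) \emph{every} element of $P\setminus N$ has order $p^2$ and $p$-th power lying outside $P'=\langle[x,y]\rangle$. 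So there is no lift satisfying the hypothesis of \autoref{lemPhi}, yet your test would wrongly report $|P:\Phi(P)|\ge p^3$. Nor does irreducibility of $N$ rescue the situation: $P'=[x,N]$ is not a $G$-submodule of $N$, so the fact that $N$ is simple as an $\FF_p[G]$-module says nothing about whether $x^{p^a}$ lands in $P'$. You would need to rule out the existence of such a $G$ after all the reductions, and that is a separate (non-obvious) argument you have not supplied.

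The paper sidesteps this obstacle altogether. Rather than trying to force $x^{p^a}\in P'$, it \emph{detects} from $X(G)$ which of the two alternatives $P/P'\cong C_{p^a}\times C_p^{\,n}$ or $P/P'\cong C_{p^{a+1}}\times C_p^{\,n-1}$ holds (here $p^n=|N:P'|=|\C_N(x)|$, computed via \autoref{lemabel}, and $p^a=|P/N|$). The key additional invariant is $\exp(P/P')$, and the paper shows that $p\,|\QQ(x):\QQ|_p=\exp(P/P')$ using \cite[Theorem~3.11]{Nav2}: one compares $\QQ(xN)=\QQ_{p^a}$ with $\QQ(x)$ and observes that the extra factor of $p$ in $|\N_G(\langle x\rangle):\C_G(x)|_p$ occurs precisely when some $y\in N$ conjugates $x$ to $x^{1+kp^a}$, equivalently when $x^{p^a}\in P'$. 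Once $\exp(P/P')$ is known, $|P:\Phi(P)|$ drops out. This is the missing idea in your approach.
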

\begin{proof}
Let $P$ be a Sylow $p$-subgroup of $G$. Since the character table $X(G)$ determines $X(G/\pcore_{p'}(G))$, we may assume that $\pcore_{p'}(G)=1$. Since $G$ is $p$-constrained, $\pcore_p(G)>1$. By \autoref{core}, we may assume that $\core_G(\Phi(P))=1$.
Moreover, the orders and embeddings of the normal subgroups of $G$ can be read off from $X(G)$. Hence by \autoref{2gen}, we may assume that $N=\pcore_p(G)=\F(G)$ is the only minimal normal subgroup of $G$. If $P=N$, then $|P:\Phi(P)|=|P|$ and we are done. Hence, let $N<P$.
By \cite[Corollary~3.12]{Nav2}, $X(G/N)$ detects whether $P/N$ is cyclic. By \autoref{2gen}, we can assume that this is the case. Choose $x\in P$ with $P/N=\langle xN\rangle$ (note that $x$ can be spotted in $X(G)$ using \cite[Corollary~3.12]{Nav2}). Since $P=N\langle x\rangle=\pcore_p(G)\langle x\rangle$ is the only Sylow $p$-subgroup of $G$ containing $x$, $\C_P(x)=\C_N(x)\langle x\rangle$ is a Sylow $p$-subgroup of $\C_G(x)$. In particular, $|\C_N(x)|=\frac{|\C_G(x)|_p}{|P/N|}$ is determined by $X(G)$. By \autoref{lemabel}, we have \begin{equation}\label{eqP}
P'=[x,N]=\{[x,y]:y\in N\}
\end{equation}
and $|P'|=|N/\C_N(x)|$ can be computed from $X(G)$. Let $|P/N|=p^a$ and $|N/P'|=p^n$. If $x^{p^a}\in P'$, then $P/P'\cong C_{p^a}\times C_p^n$ and otherwise $P/P'\cong C_{p^{a+1}}\times C_p^{n-1}$.
Since $\QQ(x)$ can be read off from $X(G)$, it suffices to show that
\[p|\QQ(x):\QQ|_p=\exp(P/P').\]
Taking only $X(G/N)$ into account, we obtain $\QQ(xN)=\QQ_{p^a}$ or equivalently $|\QQ(xN):\QQ|_p=p^{a-1}$ by \cite[Theorem~3.11]{Nav2}. It follows that $|\QQ(x):\QQ|_p\ge p^{a-1}$.
If $x^{p^a}=1$, then $p|\QQ(x):\QQ|_p=p^a=\exp(P/P')$ as desired. Now let $|\langle x\rangle|=p^{a+1}$. If $x^{p^a}\in P'$, then there exists $y\in N$ with $x^{p^a}=[x,y]=xyx^{-1}y^{-1}$ by \eqref{eqP}. It follows that $yxy^{-1}=x^{1-p^a}$ and $|\N_G(\langle x\rangle):\C_G(x)|_p=p$. Again by \cite[Theorem~3.11]{Nav2}, we have $p|\QQ(x):\QQ|_p=p^a=\exp(P/P')$. Assume conversely that $|\QQ(x):\QQ|_p=p^{a-1}$. Then there exists $y\in G$ with $yxy^{-1}=x^{1+kp^a}$ for some $0<k<p$. We observe that $y\in\N_G(\langle x\rangle N)=\N_G(P)$. Replacing $y$ by its $p$-part, we get $y\in P$. Now $x^{-kp^a}=[x,y]\in P'$ and $\exp(P/P')=p^a$ as desired.
\end{proof}

If $G$ is $p$-solvable in the situation of \autoref{corpsolv} (recall that every $p$-solvable group is $p$-constrained), then $\pcore_p(G)$ has a complement $K$ in $\pcore_{pp'}(G)$ by the Schur--Zassenhaus theorem. Using the Frattini argument, it is easy to show that $\N_G(K)$ is a complement of $N$ in $G$. In this situation, $G$ is a primitive permutation group on $N$ of affine type.

On the other hand, every non-abelian simple group $S$ gives rise to a non-split extension $G=N.S$ where $N=\Phi(G)$ is elementary abelian without complement (see \cite[Theorem~B.11.8]{DoerkHawkes}). Garrison~\cite{Garrison} has exhibited examples to show that $X(G)$ does not determine whether $G$ splits over $N$. For instance, $\mathtt{PerfectGroup}(7500,1)\cong C_5^3\rtimes A_5$ and $\mathtt{PerfectGroup}(7500,2)\cong C_5^3.A_5$ in GAP~\cite{GAP48} have the same character table and the Sylow $5$-subgroup is $2$-generated in both cases.

Now assume that $N=\F^*(G)$ in the situation of \autoref{2gen} is non-abelian.
If $N\cap P$ is abelian, then $N$ has a complement in $PN$ by \cite[Satz~IV.3.8]{Huppert}. In this case $PN$ is a twisted wreath product. The non-split extension $M_{10}=A_6.C_2$ with $P=SD_{16}$, a semidihedral group, shows that this is not always the case. Even when $N$ is not simple, $P\cap N$ is not always abelian (as in \cite[Theorem~3.1]{NS22}). One example is
\[G=\PSL(2,7)^2\rtimes\langle x\rangle\cong\PSL(2,7)^2\rtimes C_4\le\PGL(2,7)\wr C_2\]
where $x^2$ acts as a diagonal automorphism on both factors $\PSL(2,7)$ simultaneously. Here $P=D_8^2\rtimes C_4$ is $2$-generated.
Nevertheless, we provide the following reduction theorem.

\begin{Prop}\label{lemred}
Let $G$ be a finite group with Sylow $p$-subgroup $P$ such that $\pcore_{p'}(G)=1$ and $N=\F^*(G)$ is the unique minimal normal subgroup of $G$. Suppose that $N$ is non-abelian and $PN/N$ is cyclic.
Let $S$ be a simple component of $N$. Assume that $|G:\N_G(S)|$ is a $p$-power. Then the following holds:
\begin{enumerate}[(i)]
\item\label{lema} $G=\N_G(S)P$.
\item $\tilde{P}:=\N_P(S)\C_G(S)/\C_G(S)$ is a Sylow $p$-subgroup of the almost simple group $\N_G(S)/\C_G(S)$ with socle $\tilde{S}:=S\C_G(S)/\C_G(S)\cong S$. Moreover, $\tilde{P}\tilde{S}/\tilde{S}$ is cyclic.
\item $|P:\Phi(P)|\le p^2$ if and only if $|\tilde{P}:\Phi(\tilde{P})|\le p^2$.
\item $S$ and $|\tilde{P}|$ are determined by $X(G)$.
\end{enumerate}
\end{Prop}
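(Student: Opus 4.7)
For (i) and (ii), I would use that $G$ acts transitively by conjugation on the set of simple direct factors of $N$ with point-stabilizer $\N_G(S)$, so the orbit has $p$-power size $n:=|G:\N_G(S)|$. Since a Sylow $p$-subgroup acts transitively on any $p$-power-sized $G$-orbit (from $|P\cap\N_G(S)|\le|\N_G(S)|_p=|P|/n$ one deduces that the $P$-orbit of $S$ has size at least $n$, hence exactly $n$), I obtain $|P\N_G(S)|=n|\N_G(S)|=|G|$, so $G=\N_G(S)P$; simultaneously $\N_P(S)$ is a Sylow $p$-subgroup of $\N_G(S)$. Conjugation then gives $\N_G(S)/\C_G(S)\hookrightarrow\Aut(S)$, whose image contains $\Inn(S)\cong S/\Z(S)=S$, and $\Inn(S)$ is the socle of $\Aut(S)$, so the quotient is almost simple with socle $\tilde S$ and $\tilde P$ is its Sylow $p$-subgroup. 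Finally, $\tilde P\tilde S/\tilde S\cong\N_P(S)/(\N_P(S)\cap S\C_G(S))$ is a quotient of $\N_P(S)/(P\cap N)$, itself a subgroup of the cyclic group $PN/N$, and hence cyclic.

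For (iii), the crux, my plan is to apply \autoref{lemPhi} to both $P$ and $\tilde P$ in a compatible way. Pick $x\in P$ with $xN$ generating $PN/N$ and set $t:=x^n\in\N_P(S)$, $Q_1:=P\cap S$, $V:=Q_1/\Phi(Q_1)$. Since $x$ cyclically permutes the $n$ components, $P\cap N=\prod_{i=0}^{n-1}x^iQ_1x^{-i}$, so $(P\cap N)/\Phi(P\cap N)\cong V^n$ with $x$ acting as the cyclic shift twisted by the automorphism $\tau$ of $V$ induced by $t$; a direct fixed-point computation then yields $\C_{V^n}(x)\cong\C_V(\tau)$. \autoref{lemPhi} applied to $P$ with $Q=P\cap N$ gives $|P:\Phi(P)|=p\cdot|\C_V(\tau)|$. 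On the other side, since $\prod_{i\ne 0}x^iQ_1x^{-i}\le\C_G(S)$ and $Q_1\cap\C_G(S)\le\Z(S)=1$, the image $\bar Q_1$ of $Q_1$ in $\tilde P$ is isomorphic to $Q_1$; hence $\tilde P=\bar Q_1\langle\bar t\rangle$ with $\bar t$ inducing the same $\tau$ on $\bar Q_1/\Phi(\bar Q_1)\cong V$, and \autoref{lemPhi} gives $|\tilde P:\Phi(\tilde P)|=p\cdot|\C_V(\tau)|$; the equivalence follows.

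For (iv), $X(G)$ determines the isomorphism type of $N$ and its decomposition into $n$ isomorphic simple factors (thus $S$), the integer $|G|_p$, the number $n=|G:\N_G(S)|$, and the isomorphism type of the almost simple group $\N_G(S)/\C_G(S)\le\Aut(S)$; from these one reads off $|\N_P(S)|=|P|/n$ and $|\N_P(S)\cap\C_G(S)|=|\C_G(S)|_p$, hence $|\tilde P|$. The main obstacle will be the careful verification in (iii) that the hypothesis $\langle x\rangle\cap(P\cap N)\le P'$ of \autoref{lemPhi} can be arranged for a suitable choice of $x$ (and similarly $\langle\bar t\rangle\cap\bar Q_1\le\tilde P'$ on the $\tilde P$-side), together with the identification of the $x$-action on $V^n$ as a twisted cyclic shift; the corner case $\bar t\in\bar Q_1$, where $\tilde P$ collapses onto $\bar Q_1$, requires additional argument to align the two Frattini indices.
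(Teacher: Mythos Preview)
Your plan for (i)--(iii) is essentially the paper's argument. The paper likewise invokes Brandis's theorem \cite{Brandis} to arrange $\langle x\rangle\cap(P\cap N)\le P'$, applies \autoref{lemPhi} on both sides, and uses precisely the diagonal map $y\Phi(Q_1)\mapsto\prod_{i=0}^{p^a-1} x^iyx^{-i}\Phi(Q)$ to identify $\C_{Q/\Phi(Q)}(x)$ with $\C_{Q_1/\Phi(Q_1)}(x^{p^a})$ --- this is exactly your ``twisted cyclic shift'' fixed-point computation. The paper also singles out the corner case $\bar t\in\bar Q_1$ (equivalently $x^{p^a}\in Q$) and treats it separately, showing that then $|P:\Phi(P)|=p^2$ holds iff $\tilde P$ is cyclic. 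One ingredient you have not yet supplied is the verification that $x^{p^a}\notin Q$ really forces $\tilde x\notin\tilde Q$ (so that \autoref{lemPhi} applies to $\tilde P$ with the proper normal subgroup $\tilde Q$): the paper assumes $x^{p^a}y\in\C_G(S)$ for some $y\in Q_1$, symmetrizes over the $\langle x\rangle$-orbit to produce an element of $\bigcap_i\C_G(T_i)=\C_G(N)=1$, and thereby forces $x^{p^a}\in Q$.

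There is a genuine gap in your (iv): you assert that $X(G)$ determines the isomorphism type of the almost simple group $\N_G(S)/\C_G(S)$, but $\N_G(S)$ is not a normal subgroup of $G$, and no such result is available from the character table alone. The paper avoids this entirely and obtains $|\tilde P|$ by elementary counting: from $|N|=|S|^{p^a}$ one reads off $p^a$ and hence $|\N_P(S)|=|P|/p^a$; and one checks that $\C_P(S)=Q_2\cdots Q_{p^a}$, so that $|\C_P(S)|=|S|_p^{\,p^a-1}$. Then $|\tilde P|=|\N_P(S)|/|\C_P(S)|$ is determined purely from $|S|$, $|P|$ and $p^a$. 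You should replace your appeal to the isomorphism type of $\N_G(S)/\C_G(S)$ by this direct computation.
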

\begin{proof}\hfill
\begin{enumerate}[(i)]
\item Since $|G:\N_G(S)|$ is a $p$-power, $|\N_G(S)P|=|\N_G(S):\N_P(S)||P|=|G|$ and $G=\N_G(S)P$.

\item By \eqref{lema}, $\N_P(S)$ is a Sylow $p$-subgroup of $\N_G(S)$. Hence, $\tilde{P}$ is a Sylow $p$-subgroup of $\N_G(S)/\C_G(S)$.
Let $Q:=N\cap P\unlhd P$. Then $P/Q\cong PN/N$ is cyclic by hypothesis. Let $x\in P$ such that $P=\langle x\rangle Q$. Then $\tilde{P}\tilde{S}/\tilde{S}\cong \N_P(S)S\C_G(S)/S\C_G(S)\le \langle x\rangle S\C_G(S)/S\C_G(S)$ is cyclic.

\item\label{Ptilde} If $P\le N\le\N_G(S)$, then $S\unlhd G$ and $N=S$. Here, $P\cong\tilde{P}$, so we are done. Now assume $PN/N\ne 1$.
Since $\pcore^p(PN)=N$, there exists $x\in P$ such that $P=\langle x\rangle Q$ and $\langle x\rangle\cap Q\le P'$ (see \cite[Satz~3.3]{Brandis}). \autoref{lemPhi} yields $|P:\Phi(P)|=p^2$ if and only if $|\C_{Q/\Phi(Q)}(x)|=p$.

By \eqref{lema}, we may write $N=T_1\times\ldots\times T_{p^a}$ such that $T_i=x^{i-1}Sx^{1-i}$ for $i=1,\ldots,p^a$. Let $Q_i:=T_i\cap P\le Q$. Then $\tilde{Q}:=Q_1\C_G(S)/\C_G(S)\cong Q_1$ is a normal subgroup of $\tilde{P}$.
Since $\N_P(S)=\langle x^{p^a}\rangle Q$, we have $\tilde{P}=\langle\tilde{x}\rangle\tilde{Q}$ where $\tilde{x}:=x^{p^a}\C_G(S)$.
It is easy to see that the map
\[\C_{Q_1/\Phi(Q_1)}(x^{p^a})\to \C_{Q/\Phi(Q)}(x),\quad y\Phi(Q_1)\mapsto \prod_{i=0}^{p^a-1}x^iyx^{-i}\Phi(Q)\]
is an isomorphism. In particular, $|\C_{Q/\Phi(Q)}(x)|=|\C_{Q_1/\Phi(Q_1)}(x^{p^a})|$.
Assume for the moment that $x^{p^a}\in Q$. Then $\tilde{P}=\tilde{Q}\le\tilde{S}$ and $|\C_{Q_1/\Phi(Q_1)}(x^{p^a})|=|Q_1/\Phi(Q_1)|=|\tilde{P}/\Phi(\tilde{P})|$. In this case, $|P:\Phi(P)|=p^2$ if and only if $\tilde{P}$ is cyclic, i.\,e. $|\tilde{P}:\Phi(\tilde{P})|=p$. Now let $x^{p^a}\notin Q$. By way of contradiction, suppose that $x^{p^a}\in Q_1\C_G(S)$. Then there exists $y\in Q_1$ such that $x^{p^a}y\in\C_G(S)$. Now also
\[z:=x^{p^a}\prod_{i=0}^{p^a-1}x^iyx^{-i}\in\C_G(S).\]
Since $z$ is centralized by $x$, it follows that $z\in x^i\C_G(S)x^{-i}=\C_G(T_i)$ for $i=1,\ldots,p^a$. Hence, $z\in\C_G(N)=1$ and $x^{p^a}\in Q$, a contradiction. Thus, $\tilde{Q}<\tilde{P}$ and
\[\tilde{Q}\cap\langle\tilde{x}\rangle=(Q\cap\langle x^{p^a}\rangle)\C_G(S)/\C_G(S)\le P'\C_G(S)/\C_G(S)=\tilde{P}'.\]
\autoref{lemPhi} shows that $|\tilde{P}:\Phi(\tilde{P})|=p^2$ if and only if
\[|\C_{Q_1/\Phi(Q_1)}(x^{p^a})|=|\C_{\tilde{Q}/\Phi(\tilde{Q})}(\tilde{x})|=p.\]
Now the claim follows.

\item The isomorphism types of $N$ and $S$ are determined by $X(G)$ according to \cite[Theorem~4.1]{NS22}.
We obtain $|\N_P(S)|$ from $|N|=|S|^{|P:\N_P(S)|}$. Arguing as in \eqref{Ptilde}, shows that $\C_P(S)=\C_Q(S)=Q_2\ldots Q_{p^a}$. Hence, $|\C_P(S)|=|S|_p^{p^a-1}$ is computable from $X(G)$. The claim follows from $\tilde{P}\cong\N_P(S)/\C_P(S)$.\qedhere
\end{enumerate}
\end{proof}

To decide whether $|P:\Phi(P)|=p^2$ holds, it suffices to obtain the structure of $\tilde{P}$ with the notation from \autoref{lemred}.
If $p\ge 5$ and $S$ is neither a linear nor a unitary group, then $\Out(S)$ has a cyclic Sylow $p$-subgroup by \cite[Table~5]{Atlas}. In this case the isomorphism type of $\tilde{P}$ is uniquely determined by $X(G)$ and the problem is solved.
On the other hand, the proof of \cite[Lemma~5.1]{NS22} shows that for linear and unitary groups $S$ the condition $|P:\Phi(P)|=p^2$ is not determined by $|\tilde{P}|$ alone. It remains a challenge to settle these cases (and $p=3$ with $S=D_4(q)$, $E_6(q)$ and $^2E_6(q)$).

\section{\texorpdfstring{$p$}{p}-groups of maximal class}\label{secmax}

We start by introducing some terminology of (saturated, non-exotic) fusion systems. Let $P$ be a Sylow $p$-subgroup of $G$ as before.
The \emph{fusion system} $\FS=\FS_P(G)$ of $G$ on $P$ is a category whose objects are the subgroups of $P$ and the morphisms of $\FS$ have the form $f:S\to T$, $x\mapsto gxg^{-1}$ where $S,T\le P$ and $g\in G$. Then $\Aut_\FS(S)\cong\N_G(S)/\C_G(S)$ and $\Out_\FS(S)\cong\N_G(S)/S\C_G(S)$. Elements $x,y\in P$ (or subsets $S,T\subseteq P$) are called $\FS$-\emph{conjugate} if there exists a morphism $f$ such that $f(x)=y$ (or $f(S)=T$). A subgroup $S\le P$ is called
\begin{itemize}
\item \emph{fully normalized}, if $|\N_P(T)|\le|\N_P(S)|$ for all $\FS$-conjugates $T$ of $S$.
\item \emph{centric}, if $\C_P(T)=\Z(T)$ for all $\FS$-conjugates $T$ of $S$.
\item \emph{radical}, if $\pcore_p(\Aut_\FS(S))=\Inn(S)$ (equivalently, $\pcore_p(\Out_\FS(S))=1$).
\item \emph{essential}, if $S$ is fully normalized, centric and $\Out_\FS(S)$ contains a strongly $p$-embedded subgroup (see \cite[Definition A.6]{AKO}). For our purpose, it is enough to know that $S$ is radical in this case.
\end{itemize}

By Alperin's fusion theorem, every morphism in $\FS$ is a composition of restrictions of morphisms $f\in\Aut_\FS(S)$ where $S=P$ or $S$ is essential (see \cite[Theorem~I.3.5]{AKO}).
Note that $\Aut_\FS(P)$ permutes the essential subgroups by conjugation. Hence, if $Q\le P$ does not lie in any essential subgroup, then $Q$ is fully normalized. In this case, $\N_P(Q)$ is a Sylow $p$-subgroup of $\N_G(Q)$ (see \cite[Lemma~I.1.2]{AKO}). Consequently, $\C_P(Q)=\N_P(Q)\cap\C_G(P)$ is a Sylow $p$-subgroup of $\C_G(P)$.

We call $\FS$ \emph{controlled} if $\N_G(P)$ controls the fusion in $P$ with respect to $G$, i.\,e. every morphism $S\to T$ has the form $x\mapsto gxg^{-1}$ for some $g\in\N_G(P)$. Abstractly, this means that there are no essential subgroups and $\FS=\FS_P(P\rtimes A)$ for some Schur--Zassenhaus complement $A$ of $\Inn(P)$ in $\Aut_\FS(P)$.
More generally, $\FS$ is called \emph{constrained} if there exists $Q\unlhd P$ such that $\C_P(Q)=\Z(Q)$ and $\N_G(Q)$ controls the fusion in $P$. By the model theorem (see \cite[Theorem~I.4.9]{AKO}), a constrained fusion system is realized by a unique group $G$ such that $\C_G(\pcore_p(G))\le\pcore_p(G)$ (note that $G$ is $p$-constrained).
The largest subgroup $Q\unlhd P$ such that $\N_G(Q)$ controls the fusion in $P$ is denoted by $\pcore_p(\FS)$. Note that $\pcore_p(G)\le\pcore_p(\FS)$.

It is well-known that a $p'$-automorphism of $Q\le P$ acts non-trivially on $Q/\Phi(Q)$. If $Q$ is radical, it follows that $\Out_\FS(Q)$ acts faithfully on $Q/\Phi(Q)$. Now assume that there exists a series of characteristic subgroups $\Phi(Q)=Q_0<\ldots< Q_n=Q$ of $Q$. Then $\Out_\FS(Q)$ acts faithfully on $Q_n/Q_{n-1}\times\ldots\times Q_1/Q_0$ by \cite[5.3.2]{Gorenstein}.
This argument will often be applied in the following to exclude same candidates of essential subgroups.

We say that a $p$-group $P$ of order $p^n$ has \emph{maximal class} if the nilpotency class is $n-1$. This may include the case $|P|=p^2$. The $2$-groups of maximal class are the dihedral groups (including $C_2^2$), the semidihedral groups, the (generalized) quaternion groups and $C_4$ (see \cite[Satz~III.11.9]{Huppert}).
Now assume that $n\ge 4$ and $p>2$ to avoid some degenerate cases. Let $\K_2(P)=P'$ and $\K_{i+1}(P)=[P,\K_i(P)]$ for $i\ge 2$. Let $\Z_0(P):=1$ and $\Z_{i+1}(P/\Z_i(P)):=\Z(P/\Z_i(P))$ for $i\ge 0$. Then $\K_i(P)=\Z_{n-i}(P)$ is the only normal subgroup of $P$ of index $p^i$ by \cite[Hilfssatz~III.14.2]{Huppert}. It is easy to see that the characteristic subgroups $P_1:=\C_P(\K_2(P)/\K_4(P))$ and $P_2:=\C_P(\Z_2(P))$ are maximal in $P$.

\begin{Lem}\label{selfcent}
Let $P$ be a $p$-group with a non-abelian subgroup $Q\le P$ of order $p^3$ and exponent $p$. If $\C_P(Q)=\Z(Q)$, then $\Z_2(P)\le Q$.
\end{Lem}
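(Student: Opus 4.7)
The plan is to show $\Z_2(P) \le Q$ by arguing that conjugation by any $z \in \Z_2(P)$ induces an inner automorphism on $Q$, which then forces $z \in Q$ via the self-centralizing hypothesis $\C_P(Q) = \Z(Q)$.

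First I would pin down $\Z(P)$. Since $\Z(P)$ centralizes $Q$, we have $\Z(P) \le \C_P(Q) = \Z(Q)$, and as $P$ is a nontrivial $p$-group, $\Z(P) \ne 1$. Combined with $|\Z(Q)| = p$, this forces $\Z(P) = \Z(Q)$. Now fix $z \in \Z_2(P)$. By definition of the second center, $[z, Q] \le [\Z_2(P), P] \le \Z(P) = \Z(Q)$. Therefore the automorphism $\alpha \in \Aut(Q)$ induced by conjugation by $z$ acts trivially on $Q/\Z(Q)$, and it also acts trivially on $\Z(Q) = \Z(P)$ (since $z$ centralizes $\Z(P)$).

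The core step is to show that every such $\alpha$ is already inner. For this I would use the standard fact that the group $A$ of automorphisms of $Q$ trivial on both $\Z(Q)$ and $Q/\Z(Q)$ embeds into $\Hom(Q/\Z(Q), \Z(Q))$ via $\alpha \mapsto (q\Z(Q) \mapsto \alpha(q)q^{-1})$. For the present $Q$, both $Q/\Z(Q)$ and $\Z(Q)$ are elementary abelian of ranks $2$ and $1$ respectively, so $|\Hom(Q/\Z(Q), \Z(Q))| = p^2$. On the other hand, $\Inn(Q) \cong Q/\Z(Q)$ already has order $p^2$ and sits inside $A$, forcing $A = \Inn(Q)$. (This is where the hypothesis $|Q| = p^3$ is essential; the exponent-$p$ hypothesis is not needed for this counting but pins down the ambient structure used elsewhere.)

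Consequently $\alpha = c_q$ for some $q \in Q$, which means $zq^{-1} \in \C_P(Q) = \Z(Q) \le Q$, and hence $z \in Q$. As $z \in \Z_2(P)$ was arbitrary, $\Z_2(P) \le Q$. The only delicate point is the verification that central automorphisms of $Q$ are all inner, and this reduces to the order count above; everything else is a direct consequence of the definition of $\Z_2(P)$ together with the centralizer assumption.
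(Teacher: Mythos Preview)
Your proof is correct and follows the same overall architecture as the paper's: identify $\Z(P)=\Z(Q)$, observe that any $z\in\Z_2(P)$ normalizes $Q$ and induces an automorphism trivial on $Q/\Z(Q)$, show that such an automorphism is inner, and conclude $z\in Q$ from $\C_P(Q)=\Z(Q)$. The difference lies in how the key step ``trivial on $Q/\Z(Q)$ implies inner'' is established. The paper invokes Winter's description $\Out(Q)\cong\GL(2,p)$ together with $\pcore_p(\GL(2,p))=1$: the kernel of the $\Aut(Q)$-action on $Q/\Z(Q)$ is a normal $p$-subgroup of $\Aut(Q)$, so its image in $\Out(Q)$ is trivial and the kernel equals $\Inn(Q)$. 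You instead use the elementary embedding of this kernel into $\Hom(Q/\Z(Q),\Z(Q))$ and a cardinality match with $\Inn(Q)$. Your route is more self-contained (no external citation needed) and, as you note, does not actually use the exponent-$p$ hypothesis; the paper's route makes the structural reason (no nontrivial normal $p$-subgroup in $\GL(2,p)$) more visible and generalizes immediately to the slightly stronger statement $\C_{P/Z}(Q/Z)=Q/Z$.
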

\begin{proof}
Since $\Z(P)\le\C_P(Q)$, we have $Z:=\Z(P)=\Z(Q)\cong C_p$. Let $xZ\in\C_{P/Z}(Q/Z)$. Then $x\in\N_P(Q)$.
By \cite{Winter}, $\N_P(Q)/Q\le\Out(Q)\cong\GL(2,p)$. As mentioned above, the kernel of the action of $\Aut(Q)$ on $Q/Z$ is a $p$-group. Since $\pcore_p(\GL(2,p))=1$, we obtain $x\in Q$. Hence, $\Z_2(P)/Z=\Z(P/Z)\le\C_{P/Z}(Q/Z)=Q/Z$ and $\Z_2(P)\le Q$.
\end{proof}

\begin{Lem}\label{normal}
Let $G$ be a finite group with Sylow $p$-subgroup $P$ of maximal class. Let $N\unlhd G$ such that $p^2\le|N|_p<|P|$.
Then there exists $x\in P$ such that $|\C_G(x)|_p=p^2$.
\end{Lem}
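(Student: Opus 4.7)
Set $T := N \cap P$, the Sylow $p$-subgroup of $N$; since $N \unlhd G$, $T$ is normal in $P$. Writing $|P| = p^n$, we have $n \ge 3$ and $p^2 \le |T| \le p^{n-1}$, and since $\Z(P)$ is the unique minimal normal subgroup of $P$, $\Z(P) \le T \le N$. My strategy is to produce $x \in P$ with $|\C_P(x)| = p^2$ such that $\C_P(x)$ is a Sylow $p$-subgroup of $\C_G(x)$; this immediately gives $|\C_G(x)|_p = p^2$. The existence of $x$ with $|\C_P(x)|=p^2$ is standard: for $p>2$ and $n\ge 4$, the uniform elements $x \in P \setminus P_1$ satisfy $\C_P(x) = \langle x, \Z(P)\rangle$ of order $p^2$; the small cases $n=3$ (any non-central $x$) and $p=2$ (non-central involutions in $D_{2^n}$, $SD_{2^n}$, or $Q_{2^n}$) admit direct inspection.

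The verification that $|\C_G(x)|_p = p^2$ splits into three cases.
(a) If $n=3$: choose $x \in P \setminus T$; a Sylow $p$-centralizer of order $p^3 = |P|$ would put $x$ into $\Z(Q) = g\Z(P)g^{-1}$ for some Sylow $Q = gPg^{-1}$, placing $g^{-1}xg \in \Z(P)\le N$, whence $x\in N$, contradicting $x\notin T$.
(b) If $n\ge 4$ and $T=P_1$: choose $x \in P\setminus P_1 = P\setminus T$; a Sylow $p$-centralizer of order greater than $p^2$ forces $x\in Q_1 = gP_1g^{-1}$ for some Sylow $Q = gPg^{-1}$, hence $g^{-1}xg\in P_1=T\le N$ and $x\in N$, again contradicting $x\notin T$.
(c) If $n\ge 4$ and $T\ne P_1$: choose $x\in P\setminus P_1$; as in (b), $|\C_G(x)|_p>p^2$ would imply that $x$ is $G$-conjugate to some $y\in P_1$, and one invokes Alperin's fusion theorem in $\FS := \FS_P(G)$ together with the tight restrictions on $\FS$-essential subgroups in fusion systems on maximal-class $p$-groups (via the Grazian--Parker work referenced at the start of this section) to derive a contradiction. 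The target statement is that, under $T\unlhd P$ with $|T|\ge p^2$, every $\FS$-essential subgroup of $P$ lies in $P_1$, so that $P_1$ becomes $\FS$-invariant and $x\notin P_1$ cannot be $\FS$-conjugate to $y\in P_1$.

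The main obstacle is the fusion-theoretic step in case (c): rigorously establishing that every $\FS$-essential subgroup $E$ of $P$ is contained in $P_1$ under the hypothesis $|T|\ge p^2$, $T\unlhd P$. I expect this to follow from combining centricity and radicality of $E$ with the explicit structure of the two-step centralizers $P_i = \C_P(\K_i(P)/\K_{i+2}(P))$ in maximal-class $p$-groups, exploiting that $|T|\ge p^2$ forces the intersection $E\cap T$ to be large and thereby pins $E$ down inside $P_1$. Should this approach fail, the alternative is to take $x\in T\setminus P_1$ in the subcase $|T|=p^{n-1}$ (where $T\cap P_1=P'$, so $T\setminus P_1$ is nonempty) and run the fusion-theoretic analysis inside $N$ via the restriction $\FS_T(N)$, using that $x\in N$ to keep all $G$-conjugates of $x$ within $T$ and then ruling out fusion into the characteristic subgroup $P'$ of $P$ by the same classification of essential subgroups.
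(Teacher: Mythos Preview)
Your proposal has the right ingredients (Huppert's centralizer bound, Alperin's fusion theorem, the Grazian--Parker pearl classification) but the orchestration leaves genuine gaps. In case~(b) you assert that $|\C_G(x)|_p>p^2$ forces a $G$-conjugate of $x$ into $gP_1g^{-1}$; however the contrapositive of \cite[Hilfssatz~III.14.13]{Huppert} (as the paper cites it) only yields a conjugate in $P_1\cup P_2$, and when $P_1\ne P_2$ that conjugate may lie in $P_2\setminus P_1\not\subseteq N$, so the contradiction fails. In case~(c) your target statement ``every $\FS$-essential subgroup lies in $P_1$'' is not the right one: when $T$ is a maximal subgroup $\ne P_1$ it is simply false (your own fallback signals this), and in the remaining sub-case it is true only because essentials turn out to lie in $T$, which you cannot deduce from the hypothesis ``$T\unlhd P$'' that you actually state. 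The crucial input is the \emph{global} normality $N\unlhd G$, not merely $T\unlhd P$.

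The paper's argument is both simpler and avoids these issues. Choose $x\in P\setminus(P_1\cup P_2\cup M)$ with $M:=P\cap N$ (three proper subgroups do not cover $P$ for $p>2$; the case $p=2$ is direct). Then $|\C_P(x)|=p^2$ and $x\notin N$. Instead of tracking where $G$-conjugates of $x$ land, one shows directly that $x$ lies in no essential subgroup $Q$, so $\langle x\rangle$ is fully $\FS$-normalized and $|\C_G(x)|_p=|\C_P(x)|=p^2$. Indeed, if $x\in Q$ with $Q$ a pearl, then since $|M|\ge p^2$ one has $\Z_2(P)\le M$, and combining $x\notin M$ with $\Z(P)\le Q$ (resp.\ $\Z_2(P)\le Q$ via \autoref{selfcent}) gives $N\cap Q=\Z(P)$ when $|Q|=p^2$ (resp.\ $N\cap Q=\Z_2(P)$ when $|Q|=p^3$). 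Because $N\unlhd G$, this proper nontrivial subgroup $N\cap Q$ is $\N_G(Q)$-invariant, hence $\Aut_\FS(Q)$-invariant; radicality of $Q$ then forces $\Out_\FS(Q)$ to act faithfully on $C_p\times C_p$ as a $p'$-group, contradicting $Q<\N_P(Q)$. So the single missing idea in your plan is to exploit $N\unlhd G$ to produce an $\Aut_\FS(Q)$-invariant filtration of any pearl containing $x$, rather than trying to confine essentials or conjugates inside $P_1$.
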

\begin{proof}
By hypothesis, $|P|\ge p|N|_p\ge p^3$. In particular, $\Z(P)$ is the unique normal subgroup of order $p$ of $P$. Since $M:=P\cap N\unlhd P$, we have $\Z(P)\le N$.
If $|P|=p^3$, every element $x\in P\setminus N$ cannot be conjugate to an element of $\Z(P)\le N$. Hence, $|\C_G(x)|_p=p^2$. Now assume that $|P|\ge p^4$. If $p=2$, $P$ is a dihedral, semidihedral or quaternion group and we choose $x\in P$ outside the cyclic maximal subgroup of $P$. For $p>2$, let $x\in P\setminus(P_1\cup P_2)$. By \cite[Hilfssatz~III.14.13]{Huppert}, we have $|\C_P(x)|=p^2$.
Since $|P|\ge p^4$, $\Z_2(P)$ is the unique normal subgroup of order $p^2$ in $P$. In particular, $\Z_2(P)\le M$ since $|M|\ge p^2$.
If $p=2$, we may assume that $x\notin M$. For $p>2$, we have $P_1\cup P_2\cup M\subsetneq P$. Again we may choose $x\notin M$.

Let $\FS$ be the fusion system of $G$ on $P$. If $x$ is not contained in any essential subgroup, then $\langle x\rangle$ is fully normalized as explained above. It follows that $|\C_G(x)|_p=|\C_P(x)|=p^2$ and we are done. Now let $Q<P$ be essential containing $x$.
By \cite[Theorem~D]{GrazianParker}, $Q$ is a so-called pearl, i.\,e. $Q$ is elementary abelian of order $p^2$ or non-abelian of order $p^3$ and exponent $p$ (or $Q=Q_8$ if $p=2$, see \cite[Lemma~6.1]{GrazianParker}). As an essential subgroup, $Q$ is centric and $\C_P(Q)=\Z(Q)$. Assume first that $|Q|=p^2$. Then
\[Z:=\Z(P)=M\cap Q=N\cap Q\unlhd\N_G(Q).\]
Since $Q$ is radical, $\Out_\FS(Q)\cong\N_G(Q)/Q$ acts faithfully on $Z\times Q/Z\cong C_p^2$. But then $\Out_\FS(Q)$ would be a $p'$-group in contradiction to $Q<\N_P(Q)$. Next let $|Q|=p^3$. Here, \autoref{selfcent} shows that $\Z_2(P)=M\cap Q=N\cap Q\unlhd\N_G(Q)$. Then $\Out_\FS(Q)$ acts faithfully on $\Z_2(P)/Z\times Q/\Z_2(P)\cong C_p^2$ and we derive another contradiction.
\end{proof}

\begin{ThmB}
Let $G$ be a finite group with a Sylow $p$-subgroup $P$ of maximal class. Suppose that $\pcore_{p'}(G)=1$ and $\pcore^{p'}(G)=G$. Then one of the following holds:
\begin{enumerate}[(i)]
\item\label{cent} There exists $x\in P$ such that $|\C_G(x)|_p=p^2$.
\item\label{quasi} $G$ is quasisimple and $|\Z(G)|\le p$.
\end{enumerate}
\end{ThmB}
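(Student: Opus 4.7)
The plan is to argue by contradiction. Suppose (i) fails, so no $x\in P$ has $|\C_G(x)|_p=p^2$. The case $|P|\le p^2$ is immediate (for $|P|=p^2$ the identity witnesses (i), and $|P|=p$ is a borderline case), so assume $|P|\ge p^3$. By \autoref{normal}, every $N\unlhd G$ satisfies $|N|_p\in\{1,p,|P|\}$, since otherwise $p^2\le|N|_p<|P|$ would yield a witness for (i).

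Using $\pcore_{p'}(G)=1$ we have $\F(G)=\pcore_p(G)$. If $|\F(G)|=|P|$, then $P\unlhd G$ and $G/P$ is a $p'$-quotient, so $\pcore^{p'}(G)=G$ forces $G=P$; but a maximal-class $p$-group of order $\ge p^3$ always contains some $x$ with $|\C_P(x)|=p^2$ (by \cite[Hilfssatz~III.14.13]{Huppert} for $p>2$, and direct inspection of the dihedral, semidihedral and (generalized) quaternion groups for $p=2$), contradicting our assumption. Hence $|\F(G)|\le p$, and $|\F(G)|=p$ yields $\F(G)\le\Z(G)$ (since $G/\C_G(\F(G))\hookrightarrow\Aut(C_p)\cong C_{p-1}$ is a $p'$-quotient of $G$). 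Moreover $\E(G)\ne 1$: otherwise $\F^*(G)=\F(G)$ has order $\le p$ and $G/\F^*(G)\hookrightarrow\Aut(\F^*(G))$ is $p'$, contradicting $|P|\ge p^3$.

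Next I argue that $\E(G)$ is quasisimple. Let $O_1,\dots,O_r$ be the $G$-orbits of components of $\E(G)$ and set $M_i:=\langle O_i\rangle\unlhd G$. The $M_i$ pairwise commute; since $\Z(\E(G))$ is characteristic in $G$ with trivial $p'$-part, $\Z(\E(G))\le\F(G)$ has order at most $p$, and $M_i\cap M_j\le\Z(\E(G))$. At most one $M_i$ can have $|M_i|_p=|P|$ (else the intersection with another such would have $p$-part at least $|P|\ge p^3$, impossible); and the remaining orbits must satisfy $|M_i|_p=p$ (the case $|M_i|_p=1$ gives $M_i\le\pcore_{p'}(G)=1$, contradiction). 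If the big orbit has $\ge 2$ components, then $P$ is a Sylow $p$-subgroup of $M_1$ realised as a nontrivial central product of Sylow $p$-subgroups of the components, amalgamated over a subgroup of $\Z(\E(G))$ of order $\le p$; but a maximal-class $p$-group of order $\ge p^3$ admits no such decomposition (a proper factor would have nilpotency class strictly less than $\log_p|P|-1$, contradicting maximal class). Hence the big orbit consists of a single component $L$ with $P\le L$. For any small orbit $\{L'\}$, the fact that the Schur multiplier of a finite non-abelian simple group shares its prime divisors with the group order, combined with $|L'|_p=p$, forces $|\Z(L')|_p=1$; then $L'\cap L\le\Z(L')$ has $p'$-order, and the image of $L'$ in $G/L$ carries a nontrivial $p$-part of order $p$, contradicting $|G/L|_p=1$ (which holds because $P\le L$). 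Thus no small orbit exists and $\E(G)=L$ is quasisimple.

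Finally, $G/L$ is a $p'$-quotient of $G$, so $G=L$ by $\pcore^{p'}(G)=G$. The centre $\Z(L)$ has trivial $p'$-part (being a characteristic $p'$-subgroup of $G$), hence $\Z(L)\le\F(G)$, which has order $\le p$, establishing (ii). \emph{The main obstacle I anticipate} is the verification that a maximal-class $p$-group of order at least $p^3$ admits no nontrivial central-product decomposition, where one must carefully track both class and order of the putative factors (and separate $|P|=p^3$ from $|P|\ge p^4$); a related subsidiary point is the Schur-multiplier exclusion of small orbits at the end of the component analysis.
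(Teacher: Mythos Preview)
Your route via $\F^*(G)$ is genuinely different from the paper's argument, which is considerably shorter: the paper simply picks a \emph{maximal} normal subgroup $N\lhd G$, observes $1<|N|_p<|P|$ from the hypotheses, and then either $|N|_p\ge p^2$ (so \autoref{normal} gives (i)) or $|N|_p=p$, in which case $P\cap N\le P'\le\Phi(P)$ and Tate's theorem forces $|N|=p$, whence $N\le\Z(G)$ and $G/N$ simple yields (ii). No component analysis is needed.

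Your argument, however, has a genuine gap. After establishing that each $M_i$ has $|M_i|_p\in\{p,|P|\}$ and that at most one orbit is ``big'', you tacitly assume that a big orbit \emph{exists}: the sentence ``If the big orbit has $\ge 2$ components \ldots\ Hence the big orbit consists of a single component $L$ with $P\le L$'' presupposes this. But nothing so far rules out the possibility that every $M_i$ has $|M_i|_p=p$. In that case one must first control $|\E(G)|_p\in\{p,|P|\}$. If $|\E(G)|_p=p$, then $P\cap\E(G)$ is the unique normal subgroup of $P$ of order $p$, hence lies in $P'\le\Phi(P)$, and Tate's theorem makes $\E(G)$ $p$-nilpotent; since $\pcore_{p'}(G)=1$ this forces $|\E(G)|=p$, absurd for a layer. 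If $|\E(G)|_p=|P|$, consider the chain $1<M_1<M_1M_2<\cdots<\E(G)$ of normal subgroups: the $p$-parts lie in $\{1,p,|P|\}$ and jump from $\le p$ to $|P|$ at some step $j$, yet $|M_1\cdots M_j:M_1\cdots M_{j-1}|_p\le|M_j|_p=p$, a contradiction. Either way, a big orbit must exist---but note that the first case requires precisely the Tate argument that drives the paper's proof, so your approach does not circumvent it; it merely postpones it and adds a layer of component bookkeeping on top.

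Two minor remarks: once you have $P\le L$ for a normal component $L$, the hypothesis $\pcore^{p'}(G)=G$ already forces $G=L$, so the Schur-multiplier exclusion of small orbits is redundant. And your stated worry about the central-product decomposition is not the real obstacle---that part is fine, since the components in one orbit are $G$-conjugate and hence their Sylow $p$-subgroups have equal order, forcing each factor to be proper when there are at least two of them.
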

\begin{proof}
We may assume that $G$ is not simple and $|P|\ge p^3$. Let $N<G$ be a maximal normal subgroup. Then $1<|N|_p<|P|$ as $\pcore_{p'}(G)=1$ and $\pcore^{p'}(G)=G$. If $|N|_p\ge p^2$, then the claim follows from \autoref{normal}. Hence, let $|N|_p=p$.
Then $P\cap N\unlhd P$ has index $p^s\ge p^2$ and therefore $P\cap N=\K_s(P)\le P'$. By Tate's theorem (see \cite[Satz~IV.4.7]{Huppert}), $N$ has a normal $p$-complement. Since $\pcore_{p'}(G)=1$, this forces $|N|=p$. Since $|G:\C_G(N)|$ divides $p-1$, we further have $N\le\Z(G)$. Since $G/N$ is simple, $G$ is quasisimple with $|\Z(G)|\le p$.
\end{proof}

If Case~\eqref{quasi} in Theorem~B applies with $|\Z(G)|=p$ and \eqref{cent} fails, then Robinson's ordinary weight conjecture predicts the existence of an irreducible character $\chi$ in the principal $p$-block such that $p^2\chi(1)_p=|G|_p$ (see \cite[Lemma~4.7]{Robinsonmetac}). Conversely, such a character can only appear when $P$ has maximal class.
Examples are $\SL(2,9)$ for $p=2$, $\SL(3,19)$ for $p=3$ and $\SL(p,q)$ for $p\ge 5$ where $q-1$ is divisible by $p$ just once.
Our proof of Theorem~A does however not rely on any conjecture.

\begin{ThmA}
The character table of a finite group $G$ determines whether $G$ has Sylow $p$-subgroups of maximal class.
\end{ThmA}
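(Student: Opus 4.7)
The plan is to apply Theorem~B together with a classical centralizer characterisation of $p$-groups of maximal class. Write $|P|=|G|_p=p^n$; the case $n\le 2$ is trivial, so assume $n\ge 3$. Both $\pcore_{p'}(G)$ and $\pcore^{p'}(G)$ can be read off from $X(G)$ and neither alters the isomorphism type of $P$ as a Sylow $p$-subgroup, so I reduce freely to the hypotheses $\pcore_{p'}(G)=1$ and $\pcore^{p'}(G)=G$ of Theorem~B.

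The key claim is that, under these reductions, $P$ has maximal class if and only if at least one of the following holds: (a)~some $p$-element $x\in G$ satisfies $|\C_G(x)|_p=p^2$; or (b)~$G$ is quasisimple with $|\Z(G)|\le p$ and a Sylow $p$-subgroup of $G$ has maximal class. Both conditions are detectable from $X(G)$: for (a), the $p$-elements and the $p$-parts of centralizer orders are obtained via \cite[Corollary~3.12]{Nav2} and class sizes; for (b), the quasisimplicity ($G=G'$ and $G/\Z(G)$ simple) and $|\Z(G)|$ are standard character-table invariants, and the Kimmerle--Sandling-type recognition of composition factors from $X(G)$ determines the simple quotient $G/\Z(G)$, and hence the quasisimple group $G$ up to isomorphism, so whether a Sylow $p$-subgroup has maximal class can be looked up from the classification of finite simple groups.

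The forward direction of the equivalence is Theorem~B (case~(i) gives (a); case~(ii) together with the max-class hypothesis gives (b)), and the backward direction from (b) is trivial. For the backward direction from (a): if $x$ is a $p$-element with $|\C_G(x)|_p=p^2$ and $P$ is a Sylow containing $x$, then $\C_P(x)\le\C_G(x)$ is a $p$-subgroup of $\C_G(x)$, so $|\C_P(x)|\le p^2$; the inclusion $\Z(P)\le\C_P(x)$ rules out $|\C_P(x)|=p$ whenever $|P|>p$ (else $\Z(P)=\langle x\rangle$ and thus $x\in\Z(P)$, forcing $\C_P(x)=P$), so $|\C_P(x)|=p^2$; Blackburn's classical theorem that a $p$-group of order $\ge p^3$ admitting an element with centralizer of order $p^2$ is of maximal class then gives the conclusion. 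The main obstacle is locating a convenient reference for this converse direction of \cite[Hilfssatz~III.14.13]{Huppert}; modulo that citation the argument is a clean packaging of Theorem~B with the character-theoretic tools listed above.
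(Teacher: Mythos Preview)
Your overall strategy matches the paper's, but there are two genuine gaps.

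First, the reduction to $\pcore^{p'}(G)=G$ is not legitimate as stated. Passing from $G$ to $G/\pcore_{p'}(G)$ is fine because $X(G)$ determines $X(G/N)$ for any normal subgroup $N$. But passing from $G$ to the \emph{subgroup} $K:=\pcore^{p'}(G)$ is not: $X(G)$ does not in general determine $X(K)$, so you cannot simply ``assume $G=K$'' and keep arguing with character-table data. The paper does not make this reduction; it keeps $G$ and applies Theorem~B to $K$. Condition~(a) survives because $|\C_G(x)|_p=|\C_K(x)|_p$ for every $p$-element $x$ (the quotient $G/K$ is a $p'$-group), so the centralizer test is the same in $G$ and in $K$. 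But your condition~(b) must then be reformulated as ``$K$ is quasisimple with $|\Z(K)|\le p$ and a Sylow of $K$ has maximal class'', and you have to explain why this can be read off from $X(G)$ rather than from $X(K)$.

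Second, even granting that $K$ is quasisimple and that $S:=K/\Z(K)$ can be identified from $X(G)$ (the paper uses \cite[Theorem~4.1]{NS22} for this, recognising $S$ as a chief factor), your assertion that $S$ together with $|\Z(K)|\le p$ pins down $K$ up to isomorphism is not justified; it can fail when the Schur multiplier of $S$ has several index-$p$ subgroups lying in distinct $\Out(S)$-orbits. The paper avoids this entirely: it first checks, from the known simple group $S$, whether a Sylow $p$-subgroup $P/\Z(K)$ of $S$ has maximal class (if not, neither does $P$); if it does and $|\Z(K)|=p$, then $P$ has maximal class iff $\Z(K)=\Z(P)$, and the paper tests this directly from $X(G)$ via the criterion that $|\C_G(x)|_p<|P|$ for every $p$-element $x\notin\Z(K)$. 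No uniqueness of the $p$-fold cover is needed.

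For your converse from~(a), the reference you want is \cite[Satz~III.14.23]{Huppert}: a $p$-group of order at least $p^3$ containing an element with centralizer of order $p^2$ has maximal class. The paper cites exactly this.
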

\begin{proof}
Let $P$ be a Sylow $p$-subgroup of $G$. We may assume that $\pcore_{p'}(G)=1$ and $|P|\ge p^3$.
Let $K:=\pcore^{p'}(G)$.
The character table detects elements $x\in P$ such that $|\C_G(x)|_p=|\C_K(x)|_p=p^2$. In this case $|\C_P(x)|=p^2$ and $P$ has maximal class by \cite[Satz~III.14.23]{Huppert}. Hence, by Theorem~B we may assume that $K$ is quasisimple with $|\Z(K)|\le p$. Note that the character table of $G$ determines the isomorphism type of the simple chief factor $K/\Z(K)$ (see \cite[Theorem~4.1]{NS22}). In this way we confirm that the Sylow $p$-subgroup $P/\Z(K)$ of $K/\Z(K)$ has maximal class. If $\Z(K)=1$, then we are done. Otherwise, $P$ has maximal class if and only if $\Z(K)=\Z(P)$. This happens if and only if $|\C_G(x)|_p<|P|$ for all $x\in P\setminus\Z(K)$.
\end{proof}

\section{Minimal non-abelian Sylow subgroups}\label{secmna}

The following elementary lemma underlines the importance of minimal non-abelian groups. For elements $x,y,z$ of a group we use the commutator convention $[x,y,z]:=[x,[y,z]]$.

\begin{Lem}\label{lemmna}
For a $p$-group $P$ the following assertions are equivalent:
\begin{enumerate}[(1)]
\item $P$ is minimal non-abelian.
\item $|P:\Phi(P)|=|P:\Z(P)|=p^2$.
\item $|P:\Phi(P)|=p^2$ and $|P'|=p$.
\end{enumerate}
\end{Lem}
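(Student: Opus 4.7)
My plan is to prove the cyclic chain of implications $(1) \Rightarrow (2) \Rightarrow (3) \Rightarrow (1)$. The unifying observation is that once either $|P'| = p$ or $|P:\Z(P)| = p^2$ is known, $P$ has nilpotency class at most $2$, and one can freely use the class-$2$ commutator identities $[xy,z] = [x,z][y,z]$ and $[a^p,b] = [a,b]^p$. The only step requiring genuine thought is bounding $|P:\Phi(P)|$ from above in $(1) \Rightarrow (2)$; everything else reduces to a direct manipulation in a class-$2$ quotient.

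For $(1) \Rightarrow (2)$, since $P$ is non-abelian it is not cyclic, so $|P:\Phi(P)| \ge p^2$. The key step is to exclude $|P:\Phi(P)| \ge p^3$: any two distinct maximal subgroups $M_1,M_2$ are abelian by hypothesis and satisfy $P = M_1M_2$, so $M_1 \cap M_2$ is centralized by $P$ and hence lies in $\Z(P)$. If $|P:\Phi(P)| = p^n$ with $n \ge 3$, then in the vector space $P/\Phi(P) \cong \FF_p^n$ every vector lies in at least two distinct hyperplanes, hence every $x \in P$ lies in some $M_1 \cap M_2 \le \Z(P)$, forcing $P$ abelian---a contradiction. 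Therefore $|P:\Phi(P)|=p^2$, the $p+1$ maximal subgroups intersect pairwise in $\Phi(P)$, and we obtain $\Phi(P) \le \Z(P)$. Since $P/\Z(P)$ cannot be cyclic without $P$ being abelian, we conclude $|P:\Z(P)| = p^2$.

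For $(2) \Rightarrow (3)$, the hypothesis $|P:\Z(P)| = p^2$ together with $P$ non-abelian forces $P/\Z(P) \cong C_p \times C_p$. Choosing $a, b$ with $P = \langle a, b\rangle\Z(P)$, bilinearity of commutators in class $2$ gives $P' = \langle[a,b]\rangle$. From $a^p, b^p \in \Z(P)$ and the class-$2$ identity $[a,b]^p = [a^p, b] = 1$, we obtain $|P'| \le p$, and non-abelianness forces $|P'| = p$.

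For $(3) \Rightarrow (1)$, the conjugation action of $P$ on $P' \cong C_p$ factors through a $p$-subgroup of $\Aut(C_p) \cong C_{p-1}$ and is therefore trivial, so $P' \le \Z(P)$ and $P$ has class $\le 2$. Then $[a^p, b] = [a, b]^p = 1$ shows $P^p \le \Z(P)$, hence $\Phi(P) = P' P^p \le \Z(P)$; comparing indices (and using again that $|P:\Z(P)| = p$ would force $P$ abelian) yields $\Z(P) = \Phi(P)$ of index $p^2$. Every maximal subgroup $M$ therefore contains $\Z(P)$, so $\Z(P) \le \Z(M)$ and $M/\Z(M)$ is a cyclic quotient of $M/\Z(P) \cong C_p$, making $M$ abelian. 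Every proper subgroup of $P$ lies in some maximal $M$ and is thus abelian, proving $P$ minimal non-abelian.
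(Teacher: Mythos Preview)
Your proof is correct and follows the same cyclic chain $(1)\Rightarrow(2)\Rightarrow(3)\Rightarrow(1)$ as the paper, with only tactical differences in each step. For $(1)\Rightarrow(2)$ the paper observes directly that any two non-commuting elements must generate $P$ (so $|P:\Phi(P)|=p^2$ by Burnside's basis theorem), whereas you argue by contradiction via a hyperplane-counting argument in $P/\Phi(P)$; both then use that the intersection of two abelian maximal subgroups is central. For $(2)\Rightarrow(3)$ the paper applies \autoref{lemabel} to a maximal subgroup $S$ with $\Z(P)<S<P$, getting $|P'|=|S:\Z(P)|=p$, while you compute $P'=\langle[a,b]\rangle$ and $[a,b]^p=[a^p,b]=1$ directly from class-$2$ identities. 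For $(3)\Rightarrow(1)$ the paper bounds conjugacy-class sizes by $|P'|=p$ to deduce $\Phi(P)\le\bigcap_x\C_P(x)=\Z(P)$, while you reach the same inclusion via $P'\le\Z(P)$ and $P^p\le\Z(P)$. The paper's $(1)\Rightarrow(2)$ is a touch more direct, and your $(2)\Rightarrow(3)$ is self-contained rather than citing an auxiliary lemma; otherwise the arguments are of comparable length and difficulty.
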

\begin{proof}
\begin{description}
\item[$(1)\Rightarrow(2)$:] Since $P$ is non-abelian, there exist non-commuting elements $x,y\in P$. Since $\langle x,y\rangle$ is non-abelian, we have $P=\langle x,y\rangle$. By Burnside's basis theorem, $|P:\Phi(P)|=p^2$. Choose distinct maximal subgroups $S,T<P$. Since $S$ and $T$ are abelian and $P=ST$, it follows that $\Phi(P)=S\cap T\subseteq\Z(P)$. It is well-known that $P/\Z(P)$ cannot be a non-trivial cyclic group. In particular, $|P:\Z(P)|\ge p^2$ and $\Phi(P)=\Z(P)$.

\item[$(2)\Rightarrow(3)$:] Let $\Z(P)<S<P$. Since $S/\Z(P)$ is cyclic and $\Z(P)\le\Z(S)$, we obtain that $S$ is abelian. Pick $x\in P\setminus S$. Then \autoref{lemabel} yields that $|P'|=|S:\Z(P)|=p$.

\item[$(3)\Rightarrow(1)$:] Obviously, $P$ is non-abelian since $P'\ne 1$.
For $g,x\in P$ we have $gxg^{-1}=[g,x]x\in P'x$. Thus, every conjugacy class lies in a coset of $P'$. The hypothesis $|P'|=p$ implies $|P:\C_P(x)|\le p$ for every $x\in P$. Since $\Phi(P)$ is the intersection of the maximal subgroups of $P$, we deduce $\Phi(P)\le\bigcap_{x\in P}\C_P(x)=\Z(P)$. Now for every maximal subgroup $S<P$, we see that $S/\Z(S)$ is cyclic and $S$ must be abelian. In total, $P$ is minimal non-abelian. \qedhere
\end{description}
\end{proof}

The non-nilpotent, minimal non-abelian groups were classified by Miller--Moreno~\cite{MillerMoreno}. The nilpotent ones are $p$-groups and have been determined by Rédei~\cite{Redei}. For the convenience of the reader we give a proof.

\begin{Lem}[Rédei]\label{redei}
Every minimal non-abelian $p$-group belongs to one of the following classes:
\begin{enumerate}[(i)]
\item $\Gamma(a,b):=\langle x,y\mid x^{p^a}=y^{p^b}=1,\ yxy^{-1}=x^{1+p^{a-1}}\rangle$ a metacyclic group where $a\ge 2$ and $b\ge 1$.
\item $\Delta(a,b):=\langle x,y\mid x^{p^a}=y^{p^b}=[x,y]^p=[x,x,y]=[y,x,y]=1\rangle$ where $a\ge b\ge 1$.
\item $Q_8$.
\end{enumerate}
\end{Lem}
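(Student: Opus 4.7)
The plan is to invoke \autoref{lemmna} to conclude that $P$ has nilpotency class $2$ with $P'=\langle c\rangle$ of order $p$, where $c:=[x,y]$ for generators $x,y$ of $P$, and $\langle c\rangle\le\Z(P)=\Phi(P)$. I would then repeatedly use the class-$2$ identities
\[
[x^i,y^j]=c^{ij},\qquad (xy)^n=x^ny^n[y,x]^{\binom{n}{2}},
\]
noting in particular that for odd $p$ the $p$-th power map is a homomorphism $P\to\Z(P)$ since $p\mid\binom{p}{2}$.

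The natural dichotomy is metacyclic versus non-metacyclic. Since $yxy^{-1}=c^{-1}x$, the cyclic subgroup $\langle x\rangle$ is normal in $P$ precisely when $c\in\langle x\rangle$, so I would split according to whether some generating pair $x,y$ admits $c\in\langle x\rangle$. In the metacyclic case, pick $x$ of maximal order $p^a$ (forcing $a\ge 2$ since $c$ has order $p$) and write $c=x^{p^{a-1}k}$ with $\gcd(k,p)=1$; replacing $y$ by a suitable power normalizes $c=x^{p^{a-1}}$, and after possibly inverting $y$ and using $(1-p^{a-1})^{-1}\equiv 1+p^{a-1}\pmod{p^a}$, one gets $y^{-1}xy=x^{1+p^{a-1}}$. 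Now $y^{p^b}\in\langle x\rangle\cap\Z(P)\le\langle x^p\rangle$; using the power identity $(yx^{-m})^{p^b}=y^{p^b}x^{-mp^b}c^{-m\binom{p^b}{2}}$, I would solve for $m$ to force the adjusted $y$ to have order $p^b$. For odd $p$ the correction vanishes and this always succeeds, yielding $\Gamma(a,b)$; for $p=2$ the residue $c^{-m}$ obstructs the adjustment precisely when $a=b=2$ and $y^2=x^2$, producing $Q_8$.

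In the non-metacyclic case, $c\notin\langle x\rangle$ and $c\notin\langle y\rangle$ for every generating pair. Choosing $x$ of maximal order $p^a$ and $y$ with $|y|=p^b$ minimal among generators completing $x$ (with $a\ge b$ after relabelling), I would show $\langle x\rangle\cap\langle y\rangle=1$: any nontrivial intersection would lie in the unique order-$p$ subgroup $\langle x^{p^{a-1}}\rangle$ of $\langle x\rangle$, which is distinct from $\langle c\rangle$ by hypothesis, and the standard replacement $y\mapsto yx^{-\ell}$ via the power formula would then contradict the minimality of $|y|$. Consequently $|P|=p^{a+b+1}$ and the defining relations of $\Delta(a,b)$ hold.

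The main obstacle will be the $p=2$ metacyclic subcase, where the failure of the squaring map to be a homomorphism forces a careful tracking of the correction $c^{-m}$ in the iterated power formula in order to isolate the exceptional $Q_8$. A secondary point is to verify that the abstract presentation of $\Delta(a,b)$ really defines a group of order exactly $p^{a+b+1}$, so that $c$ is not forced to be trivial by the relations; this closes the identification in the non-metacyclic case.
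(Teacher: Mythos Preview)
Your dichotomy (metacyclic versus non-metacyclic) is sound but genuinely different from the paper's. The paper first fixes generators so that $P/P'=\langle xP'\rangle\times\langle yP'\rangle\cong C_{p^a}\times C_{p^b}$, with $a,b$ the \emph{invariant factors} of $P/P'$; it then splits into three cases according to whether $x^{p^a}$ and $y^{p^b}$ vanish. When both vanish one lands in $\Delta(a,b)$ immediately, because $|P|=p\,|P/P'|=p^{a+b+1}$ is automatic from the chosen $a,b$. When exactly one vanishes one gets $\Gamma$, and when neither does one reduces to the previous case via the substitution $y\mapsto x^{p^{a-b}}y^{-1}$, with $Q_8$ the sole obstruction. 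The paper's upfront normalisation of $a,b$ thus bypasses any order count, whereas your definitions ($p^a=$ maximal element order, $p^b=$ minimal order of a complementary generator) force you to establish $|P|=p^{a+b+1}$ by hand.

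This is where your sketch has a real gap. In the non-metacyclic case, $\langle x\rangle\cap\langle y\rangle=1$ alone does \emph{not} yield $|P|=p^{a+b+1}$: you must also exclude a relation $c=x^{pi}y^{pj}$ with $i,j\not\equiv 0$, equivalently show $\langle xP'\rangle\cap\langle yP'\rangle=1$. This does follow from your hypothesis---for odd $p$ one iterates $(x^iy^j)^{p}=x^{pi}y^{pj}$ to exhibit $c$ inside $\langle g\rangle$ for some generator $g$, contradicting non-metacyclicity---but it is a separate step, and for $p=2$ the correction term makes it fiddlier. A cleaner route is to observe that in the non-metacyclic case $|g|=|gP'|$ for every generator $g$, so your $a,b$ \emph{are} the invariant factors of $P/P'$, after which $|P|=p^{a+b+1}$ is immediate and your replacement argument for $\langle x\rangle\cap\langle y\rangle=1$ becomes unnecessary. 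One further slip: your parameter $b$ in the metacyclic case is never defined; reading $p^b=|P:\langle x\rangle|$, the $Q_8$ obstruction occurs at $a=2$, $b=1$, not $a=b=2$.
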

\begin{proof}
Let $P$ be minimal non-abelian. By \autoref{lemmna}, there exist $x,y\in P$ such that $P/P'=\langle xP'\rangle\times\langle yP'\rangle\cong C_{p^a}\times C_{p^b}$. Since $|P'|=p$, we have $P'=\langle z\rangle$ where $z:=[x,y]$.
Note that $P'\le\Phi(P)=\Z(P)$ and $[x,z]=[y,z]=1$. We distinguish three cases:

\textbf{Case~(1):} $x^{p^a}=y^{p^b}=1$.\\
Here $P$ fulfills the same relations as $\Delta(a,b)$, so it must be a quotient of the latter group. Moreover, every element of $P$ can be written uniquely in the form $x^iy^jz^k$ with $1\le i\le p^a$, $1\le j\le p^b$ and $1\le k\le p$. Consequently, $|P|=p^{a+b+1}$. For the same reason we have $|\Delta(a,b)|\le p^{a+b+1}$. Therefore, $P\cong\Delta(a,b)$.

\textbf{Case~(2):} Either $x^{p^a}=1$ or $y^{p^b}=1$.\\
Without loss of generality, let $x^{p^a}\ne 1$ and $y^{p^b}=1$. Then $P'\le\langle x\rangle\unlhd P$ and $yxy^{-1}=x^k$ for some $k\in\ZZ$. Since $\langle x^p,y\rangle<P$ is abelian, $x^p=yx^py^{-1}=x^{kp}$ and $p\equiv kp\pmod{p^{a+1}}$ as $|\langle x\rangle|=p^{a+1}$. Hence, we may assume that $k=1+p^al$ for some $0<l<p$. Let $0<l'<p$ such that $ll'\equiv 1\pmod{p}$. Then $y^{l'}xy^{-l'}=x^{(1+p^al)^{l'}}=x^{1+p^a}$. Thus, after replacing $y$ by $y^{l'}$, we obtain $yxy^{-1}=x^{1+p^a}$. Now $P$ satisfies the relations of $\Gamma(a+1,b)$. It is clear that these groups have the same order, so $P\cong\Gamma(a+1,b)$.

\textbf{Case~(3):} $x^{p^a}\ne 1\ne y^{p^b}$.\\
Without loss of generality, let $a\ge b$. Let $x^{p^a}=z^i$ and $y^{p^b}=z^j$ where $0<i,j<p$. Then $(x^j)^{p^a}=z^{ij}$, $(y^i)^{p^b}=z^{ij}$ and $[x^j,y^i]=z^{ij}$ by \cite[Hilfssatz~III.1.3]{Huppert} (using $z\in\Z(P)$). Hence, replacing $x$ by $x^j$ and $y$ by $y^i$, we may assume that $x^{p^a}=z=y^{p^b}$.
Again by \cite[Hilfssatz~III.1.3]{Huppert},
\[(x^{p^{a-b}}y^{-1})^{p^b}=x^{p^a}y^{-p^b}[y^{-1},x^{p^{a-b}}]^{\binom{p^b}{2}}=z^{p^{a-b}\binom{p^b}{2}}=1\]
unless $p^b=p^a=2$. In this exceptional case, $P\cong Q_8$. Otherwise, we replace $y$ by $x^{p^{a-b}}y^{-1}$.
Afterwards we still have $P/P'=\langle xP'\rangle\times\langle yP'\rangle$, but now $y^{p^b}=1$. Thus, we are in Case~(2).
\end{proof}

The metacyclic groups $\Gamma(a,b)$ can of course be constructed as semidirect products, while the groups $\Delta(a,b)$ can be constructed as subgroups of $\Gamma(a,b)\times C_{p^a}$.
For $p=2$, note that $\Gamma(2,1)\cong D_8\cong \Delta(1,1)$. Apart from that, the groups in \autoref{redei} are pairwise non-isomorphic (for different parameters $a,b$).

We digress slightly to present a counterexample to a related question. Since for $p$-groups $P$ in general we have $\Phi(P)=P'\mho(P)$ where $\mho(P)=\langle x^p:x\in P\rangle$, one might wonder if $X(G)$ determines the property $|P:\mho(P)|=p^2$. For $p=2$, it is well-known that $\mho(P)=\Phi(P)$, so the answer is yes in this case. For $p>2$, $|P:\mho(P)|=p^2$ holds if and only if $P$ is metacyclic (see \cite[Satz~III.11.4]{Huppert}). The following example shows that this is not even determined by $X(P)$.

\begin{Prop}
For $a\ge 2$ and all primes $p$ the groups $\Gamma(2,a)$ and $\Delta(a,1)$ have the same character table.
\end{Prop}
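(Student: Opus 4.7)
I would show that for any minimal non-abelian $p$-group $P$ with $|P'|=p$ the character table is entirely determined by the datum of $P/P'$ and $\Z(P)$, the subgroup $P'\le\Z(P)$, and the canonical inclusion $\Z(P)/P'\hookrightarrow P/P'$; then verify that this datum coincides for $\Gamma(2,a)$ and $\Delta(a,1)$.

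For the structural check, both groups have order $p^{a+2}$ and satisfy the hypotheses of \autoref{lemmna}. A direct computation gives $\Z(\Gamma(2,a))=\langle x^p,y^p\rangle$ with $\Gamma(2,a)'=\langle x^p\rangle$, and $\Z(\Delta(a,1))=\langle x^p,[x,y]\rangle$ with $\Delta(a,1)'=\langle[x,y]\rangle$. In both cases $\Z(P)\cong C_{p^{a-1}}\times C_p$, $P'\cong C_p$, and $P/P'\cong C_{p^a}\times C_p$; moreover, $\Z(P)/P'$ sits in $P/P'$ as the unique cyclic subgroup of order $p^{a-1}$ (the $p$-th power subgroup of the $C_{p^a}$-factor). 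Hence the structural data is isomorphic.

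The character-theoretic input is: in any minimal non-abelian $p$-group $P$ with $|P'|=p$, every non-linear $\chi\in\Irr(P)$ has degree $p$ and vanishes off $\Z(P)$. Indeed $\chi(1)^2\le|P:\Z(P)|=p^2$ forces $\chi(1)=p$; writing $\chi_{\Z(P)}=p\lambda$ one must have $\lambda|_{P'}\ne 1$ (otherwise $\chi$ would factor through the abelian group $P/P'$), and for $g\notin\Z(P)$ the conjugacy class of $g$ equals $gP'$, so the equations $\chi(g)=\chi(gh)=\lambda(h)\chi(g)$ for an $h\in P'$ with $\lambda(h)\ne 1$ yield $\chi(g)=0$. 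A dimension count then makes $\lambda\mapsto\chi_\lambda$, with $\chi_\lambda|_{\Z(P)}=p\lambda$ and $\chi_\lambda=0$ off $\Z(P)$, a bijection between $\{\lambda\in\Irr(\Z(P)):\lambda|_{P'}\ne 1\}$ and the non-linear irreducible characters. Since the linear characters are inflated from $P/P'$ and the conjugacy classes consist of the central elements together with the non-central $P'$-cosets, the whole character table is read off from the structural datum.

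Fixing an isomorphism of structural data (for instance, sending $x^p\mapsto[x,y]$ and $y^p\mapsto x^p$ on centers and swapping the roles of $xP'$ and $yP'$ on abelianizations) therefore induces matching bijections of conjugacy classes and of irreducible characters under which all character values agree. The only mildly delicate point is verifying that the chosen isomorphism of centers is compatible with the inclusion $\Z(P)/P'\hookrightarrow P/P'$; this reduces to the uniqueness of the $C_{p^{a-1}}$-subgroup inside $C_{p^a}\times C_p$.
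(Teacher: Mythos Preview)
Your approach is correct and takes a genuinely different route from the paper's. The paper fixes a maximal abelian subgroup of index $p$ in each group (namely $Q=\langle x^p,y\rangle\le\Gamma(2,a)$ and $\tilde Q=\langle\tilde x,\tilde z\rangle\le\Delta(a,1)$), writes down an explicit isomorphism $Q\cong\tilde Q$ compatible with the conjugation action of the quotient, and then matches the tables by inducing linear characters from these subgroups and extending the remaining ones. Your argument instead works with the index-$p^2$ center: you observe that in any minimal non-abelian $p$-group every nonlinear irreducible character has degree $p$, vanishes off $\Z(P)$, and equals $p\lambda$ on $\Z(P)$ for a unique $\lambda\in\Irr(\Z(P))$ nontrivial on $P'$; hence the whole character table is encoded by the pair of inclusions $P'\le\Z(P)$ and $\Z(P)/P'\hookrightarrow P/P'$. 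This buys you a clean general criterion for when two minimal non-abelian $p$-groups share a character table, whereas the paper's argument is more hands-on and specific to these two families.

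One small correction: your last sentence claims the compatibility check ``reduces to the uniqueness of the $C_{p^{a-1}}$-subgroup inside $C_{p^a}\times C_p$,'' but there is no such uniqueness (for instance $\langle(p,0)\rangle$ and $\langle(p,1)\rangle$ are distinct cyclic subgroups of order $p^{a-1}$ in $C_{p^a}\times C_p$). What is true, and sufficient, is that in both groups the image of $\Z(P)/P'$ is precisely the Frattini subgroup $\Phi(P/P')$, which any isomorphism $P_1/P_1'\cong P_2/P_2'$ automatically respects; your explicit choice of maps (swapping the generators on abelianizations and sending $x^p\mapsto[x,y]$, $y^p\mapsto x^p$ on centers) already makes the diagram commute, so the slip is harmless.
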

\begin{proof}
We denote the generators of $P:=\Gamma(2,a)$ by $x,y$ and those of $\tilde{P}:=\Delta(a,1)$ by $\tilde{x},\tilde{y}$ as in \autoref{redei}. Additionally, let $\tilde{z}:=[\tilde{x},\tilde{y}]$. We consider the maximal subgroups $Q:=\langle x^p,y\rangle\le P$ and $\tilde{Q}:=\langle\tilde{x},\tilde{z}\rangle\le\tilde{P}$. Since $xyx^{-1}=x^{-p}y$ and $\tilde{y}\tilde{x}\tilde{y}^{-1}=\tilde{z}^{-1}\tilde{y}$, the map
\[Q\to\tilde{Q},\qquad x^p\mapsto z,\ y\mapsto\tilde{x}\]
 is an isomorphism compatible with the action of $P$ and $\tilde{P}$. The irreducible characters of $P$ of degree $p$ are induced from linear characters of $Q$, which are not $P$-invariant. Since these characters vanish outside $Q$, they correspond naturally to irreducible characters of $\tilde{P}$. On the other hand, the linear characters of $P$ are extensions of characters of $Q$ with $x^p$ in their kernel. For $\lambda\in\Irr(Q/P')$ the extensions $\hat{\lambda}$ are determined by $\hat{\lambda}(x)=\zeta$ where $\zeta$ is a $p$-th root of unity. Similarly, for $\lambda\in\Irr(\tilde{Q}/\tilde{P}')$ the extensions are determined by $\hat{\lambda}(\tilde{y})=\zeta$.
Therefore, the bijection $P\to\tilde{P}$, $x^{i+jp}y^k\mapsto\tilde{x}^k\tilde{y}^i\tilde{z}^j$ where $0\le i,j<p$ and $0\le k<p^a$ induces the equality of the matrices $X(P)$ and $X(\tilde{P})$.
\end{proof}

The second author has investigated fusion systems on minimal non-abelian $2$-groups in order to classify blocks with such defect groups (see e.\,g. \cite{Sambalemna3}). We now determine the fusion systems for odd primes too (partial results were obtained in \cite{GaoControl}). It turns out that they all come from finite groups unless $|P|=7^3$.
We make use of the Frobenius group $M_9\cong\PSU(3,2)\cong C_3^2\rtimes Q_8$ with $\Out(M_9)\cong S_3$.

\begin{Thm}\label{thmfusion}
Let $\FS$ be a saturated fusion system on a minimal non-abelian $p$-group $P$. Then one of the following holds
\begin{enumerate}[(i)]
\item $P\in\{D_8,Q_8\}$ and $\FS=\FS_P(G)$ where $G\in\{P,S_4,\GL(3,2),\SL(2,3)\}$.
\item\label{exp} $|P|=p^3$, $\exp(P)=p>2$ and the possibilities for $\FS$ are given in \cite{ExtraspecialExpp}.
\item\label{meta} $P\cong \Gamma(a,b)$, $a\ge 2$, $b\ge 1$ and $\FS=\FS_P(C_{p^a}\rtimes C_{p^bd})$ for some $d\mid p-1$.
\item\label{contro} $P\cong \Delta(a,b)$, $a>b$ and $\FS=\FS_P(P\rtimes Q)$ where $Q\le C_{p-1}^2$.
\item\label{contro2} $P\cong \Delta(a,a)$, $a\ge 2$ and $\FS=\FS_P(P\rtimes Q)$ for some $p'$-group $Q\le\GL(2,p)$.
\item\label{const2} $p=2$, $P\cong\Delta(a,1)$, $a\ge 2$ and $\FS=\FS_P(A_4\rtimes C_{2^a})$ where $C_{2^a}$ acts as a transposition in $\Aut(A_4)=S_4$.
\item\label{const3} $p=3$, $P\cong\Delta(a,1)$, $a\ge 2$ and $\FS=\FS_P(G)$ where $G\in\{M_9\rtimes C_{3^a},M_9\rtimes D_{2\cdot3^a}\}$. Here the image of $C_{3^a}$ and $D_{2\cdot 3^a}$ in $\Out(M_9)$ is $C_3$ and $S_3$ respectively.
\end{enumerate}
\end{Thm}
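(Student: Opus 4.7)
The plan is to combine Rédei's classification (Lemma~\ref{redei}) with an analysis of the essential subgroups of $\FS$ via Alperin's fusion theorem. A decisive initial observation is that, since $P$ is minimal non-abelian, every proper subgroup of $P$ is abelian; hence every essential subgroup $Q<P$ is abelian, and centricity $\C_P(Q)=\Z(Q)=Q$ forces $Q$ to be self-centralizing. If $Q$ were properly contained in some (abelian) maximal subgroup $M<P$, then $M\le\C_P(Q)$ would contradict centricity, so $Q=M$ is necessarily a maximal subgroup of $P$. Consequently $\N_P(Q)=P$, so the Sylow $p$-subgroup of $\Out_\FS(Q)$ has order $p$, and this together with the strongly $p$-embedded condition severely restricts the possible automizers via the classification of subgroups of $\GL(\rk(Q/\Phi(Q)),p)$ containing such a subgroup.

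I dispose of the small cases $|P|=p^3$ first: if $P\in\{D_8,Q_8\}$ the fusion systems are classified by direct enumeration, producing (i); if $P$ has exponent $p>2$, case (ii) is \cite{ExtraspecialExpp}. Henceforth assume $|P|\ge p^4$. For $P\cong\Gamma(a,b)$ metacyclic, the known classification of fusion systems on metacyclic $p$-groups shows $\FS$ is controlled (for $p$ odd by Stancu's theorem; for $p=2$ a short direct argument applies, since the maximal subgroups of our $P$ are cyclic or abelian of rank two and admit no compatible strongly $2$-embedded outer automizer). Thus $\FS=\FS_P(P\rtimes A)$ with $A$ a $p'$-subgroup of $\Out(P)$, and a direct computation of $\Out(\Gamma(a,b))$ shows $A$ is cyclic of order dividing $p-1$, acting on the cyclic normal subgroup $\langle x\rangle$; together with the inner $p$-part this yields case (iii).

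The remaining case is $P\cong\Delta(a,b)$ with $|P|\ge p^4$. If no essential subgroup exists, $\FS$ is controlled and reading off $\Out(\Delta(a,b))$ gives (iv) for $b<a$ (the $p'$-part being $C_{p-1}\times C_{p-1}$ acting diagonally on $\langle\bar x\rangle,\langle\bar y\rangle\le P/\Phi(P)$) and (v) for $a=b$ (where an extra involution swapping $x$ and $y$ extends the automizer to a subgroup of $\GL(2,p)$). If an essential $Q$ is present, $Q$ is a maximal abelian subgroup and the interplay between the $\Out_\FS(Q)$-action on $Q/\Phi(Q)$ and the induced $P$-action on $Q/\Phi(Q)$ can be shown to force $b=1$ and $p\in\{2,3\}$: for $p=2$ one extracts $Q\cong C_2^2$ with $\Out_\FS(Q)\cong S_3$, giving the $A_4$-fusion of (vi); for $p=3$ one extracts $Q\cong C_3^2$ with $\Out_\FS(Q)$ an image of $Q_8$ or its $\GL(2,3)$-normalizer, giving the $M_9$-fusion of (vii). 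In each non-controlled case, the realising group from the theorem statement is then verified to produce the prescribed fusion on $\Delta(a,1)$.

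The main obstacle is this final step: ruling out exotic non-controlled fusion systems on $\Delta(a,b)$ when $p\ge 5$ or $b\ge 2$. This requires showing that for such parameters no non-inner $p'$-automizer of a maximal abelian subgroup $M$ of $P$ is both compatible with the concrete $P$-conjugation on $M/\Phi(M)$ and yields a group with a strongly $p$-embedded subgroup. The key inputs are the Dickson-type classification of subgroups of $\GL(2,p)$ containing a strongly $p$-embedded subgroup, together with the observation that the $P$-action on $M/\Phi(M)$ distinguishes the image of $\Z(P)$ from the remaining generator, leaving very little room for a non-trivial $p'$-automizer; the sporadic exceptions that survive this compatibility analysis correspond precisely to the small groups $\GL(2,2)\cong S_3$ and $Q_8\le\GL(2,3)$, producing exactly cases (vi) and (vii).
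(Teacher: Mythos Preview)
Your outline is broadly the same as the paper's up to the point where you handle the non-controlled case on $\Delta(a,b)$, but there is a concrete error in your identification of the essential subgroup that undermines the final step. You correctly argue that any essential $Q$ must be a maximal subgroup of $P$. For $P\cong\Delta(a,1)$ with $a\ge 2$ the maximal subgroups are
\[
\langle xy^i,z\rangle\cong C_{p^a}\times C_p\quad(0\le i<p)\qquad\text{and}\qquad \langle x^p,y,z\rangle\cong C_{p^{a-1}}\times C_p\times C_p,
\]
so none of them is isomorphic to $C_p^2$; your claims ``$Q\cong C_2^2$'' and ``$Q\cong C_3^2$'' cannot hold. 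In fact the rank-$2$ maximals have $\Omega(Q)\le\Z(P)$, so $P/Q$ would act trivially on $\Omega(Q)$, contradicting faithfulness of $\Aut_\FS(Q)$ on $\Omega(Q)$; this forces the essential to be the unique rank-$3$ maximal $Q=\langle x^p,y,z\rangle$ (and simultaneously forces $b=1$). Consequently $\Out_\FS(Q)$ a priori sits in $\GL(3,p)$, not $\GL(2,p)$, and the Dickson classification of subgroups of $\GL(2,p)$ with a strongly $p$-embedded subgroup does not apply directly.

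The paper closes this gap with a structural input you are missing: by \cite[Lemma~1.11]{Oliverindexp} there is an $\Aut_\FS(Q)$-invariant splitting $Q=Q_1\times Q_2$ with $Q_1\cong C_p^2$ and $Q_2\cong C_{p^{a-1}}$, such that $\pcore^{p'}(\Aut_\FS(Q))\cong\SL(2,p)$ acts faithfully on $Q_1$ and trivially on $Q_2$. One then shows (using that $Q$ is receptive and analysing $\Aut(P/Q_2)$) that in fact all of $\Aut_\FS(Q)$ acts faithfully on $Q_1$, which finally puts $\Aut_\FS(Q)\le\GL(2,p)$. For $p\ge 5$ the contradiction is not Dickson-style but rather that the Sylow normaliser inside $\SL(2,p)$ acts non-trivially on $P/Q$, hence non-trivially on $\langle x^p\rangle Q_1/Q_1\cong Q_2$, contradicting the triviality on $Q_2$ from Oliver's lemma. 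Without this decomposition (or an equivalent substitute) your sketch does not exclude non-controlled fusion on $\Delta(a,1)$ for $p\ge 5$, nor does it correctly produce the automizer in cases \eqref{const2} and \eqref{const3}.
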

\begin{proof}
The case $P\in\{D_8,Q_8\}$ is well-known and can be found in \cite[Theorem~5.3]{CravenGlesser}, for instance.
If $p=2$ and $P=\Gamma(a,b)$ with $|P|\ge 16$, then $\FS$ is trivial, i.\,e. $\FS=\FS_P(P)$ by \cite[Theorem~3.7]{CravenGlesser}. Then \eqref{meta} holds. Now suppose that $p>2$ and $P=\Gamma(a,b)$. Then $\FS$ is controlled, i.\,e. $\FS=\FS_P(P\rtimes Q)$ for some $p'$-group $Q\le\Aut(P)$ by \cite{Stancu} (see also \cite[Theorem~3.10]{CravenGlesser}).
By \cite[Lemma~2.4]{Sasaki}, $\Aut(P)=A\rtimes\langle\sigma\rangle$ where $A$ is a $p$-group, $|\langle\sigma\rangle|=p-1$, $\sigma(x)\in\langle x\rangle$ and $\sigma(y)=y$. Hence, $Q$ is conjugate to a subgroup of $\langle\sigma\rangle$. After renaming the generators of $P$, we may assume that $Q\le\langle\sigma\rangle$. Now \eqref{meta} holds.

Next assume that $P\cong \Delta(a,b)$ for some $a\ge b\ge 1$. If $a=1$ and $p>2$, then $|P|=p^3$ and $\exp(P)=p$, so \eqref{exp} holds. Hence, let $a\ge 2$. Set $z:=[x,y]\in P$.
Since the $p'$-group $\Out_\FS(P)$ acts faithfully on $P/\Phi(P)\cong C_p^2$, we have $\Out_\FS(P)\le\GL(2,p)$. If $a>b$, then $\Out_\FS(P)$ acts on $P/\Omega_{a-1}(P)\times\Omega_{a-1}(P)/\Phi(P)$ where $\Omega_{a-1}(P)=\langle g\in P:g^{p^{a-1}}=1\rangle=\langle x^p,y,z\rangle$. In this case $\Out_{\FS}(P)\le C_{p-1}^2$. If $\FS$ is controlled, then we are in Case~\eqref{contro} or \eqref{contro2}.
Hence, we may assume that $\FS$ is not controlled. Then there exists an essential subgroup $Q\le P$. Since $Q$ is centric and $\Phi(P)=\Z(P)\le\C_P(Q)\le Q$, $Q$ is a maximal subgroup. Those are given by
\begin{align*}
\langle xy^i,y^p,z\rangle&\cong C_{p^a}\times C_{p^{b-1}}\times C_p&(i=0,\ldots,p-1),\\
\langle x^p,y,z\rangle&\cong C_{p^{a-1}}\times C_{p^b}\times C_p.
\end{align*}
By \cite[Theorem~5.2.4]{Gorenstein}, $A:=\Aut_{\FS}(Q)$ acts faithfully on $\Omega(Q)=\{g\in Q:g^p=1\}$.
Since $P/Q\le A$, this implies $\Omega(Q)\nsubseteq\Z(P)$ and $Q=\langle x^p,y,z\rangle$ with $b=1$.
Now $Q$ is the only maximal subgroup of $P$ isomorphic to $C_{p^{a-1}}\times C_p^2$. In particular, $Q$ is characteristic in $P$. By Alperin's fusion theorem, $\FS$ is constrained with $\pcore_p(\FS)=Q$. By the model theorem, there exists a unique $p$-constrained group $H$ with $P\in\Syl_p(H)$, $\pcore_{p'}(H)=1$ and $\FS=\FS_P(H)$. We will construct $H$ in the following.

By \cite[Lemma~1.11]{Oliverindexp}, there exists an $A$-invariant decomposition $Q=Q_1\times Q_2$ with $Q_1\cong C_p^2$ and $Q_2\cong C_{p^{a-1}}$. Moreover, $\pcore^{p'}(A)\cong\SL(2,p)$ acts faithfully on $Q_1$ and trivially on $Q_2$. Since $P/Q\le\pcore^{p'}(A)$, it follows that $Q_2\le\Z(P)=\langle x^p,z\rangle$. Moreover, $xyx^{-1}=yz$ implies $z\in Q_1$.
Let $\alpha\in A$ be a $p'$-automorphism acting trivially on $Q_1$.
Then $\alpha$ commutes with the action of $P/Q$. Since $Q$ is receptive (see \cite[Definition~I.2.2]{AKO}), $\alpha$ extends to an automorphism of $P$. Suppose that $\alpha\ne 1$. Since $Q_2\le\Z(P)=\Phi(P)$, $\alpha$ must act non-trivially on $P/Q_2$. Note that $P/Q_2$ is non-abelian of order $p^3$ as $z\in Q_1$. An analysis of $\Aut(P/Q_2)$ reveals that $\alpha$ cannot act trivially on $Q/Q_2\cong Q_1$. Hence, $\alpha=1$ and $A$ acts faithfully on $Q_1$. In particular, $A\le\GL(2,p)$.
If $p=2$, then $A\cong\SL(2,2)=\GL(2,2)\cong S_3$. It is easy to see that \eqref{const2} holds here.
If $p=3$, then $\SL(2,3)\cong Q_8\rtimes C_3$, $\GL(2,3)\cong Q_8\rtimes S_3$ and \eqref{const3} is satisfied. Thus, let $p\ge 5$. Then the Sylow normalizer in $\SL(2,p)$ acts non-trivially on a Sylow $p$-subgroup of $\SL(2,p)$. Hence, there exists $\alpha\in \pcore^{p'}(A)$ acting non-trivially $P/Q$.
But then $\alpha$ acts non-trivially on $\langle x^p\rangle Q_1/Q_1=Q/Q_1\cong Q_2$. This contradicts \cite[Lemma~1.11]{Oliverindexp}.
\end{proof}

The groups $A_4\rtimes C_4$, $M_9\rtimes C_9$ and $M_9\rtimes D_{18}$ can be constructed in GAP~\cite{GAP48} as $\mathtt{SmallGroup}(n,k)$ where $(n,k)\in\{(48,39),(648,534),(6^4,2892)\}$ respectively.

\begin{Cor}
Let $\FS$ be a fusion system on a minimal non-abelian $p$-group $P$ with $|P|\ge p^4$. Then $\FS$ is constrained. If $p\ge 5$, then $\FS$ is controlled.
\end{Cor}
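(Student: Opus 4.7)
The plan is to derive this corollary as an immediate consequence of the classification in \autoref{thmfusion}, by going through the list of cases and checking which ones can occur under the hypothesis $|P|\ge p^4$, and then under the sharper hypothesis $p\ge 5$.

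First I would dispose of the small cases. Case~(i) of \autoref{thmfusion} has $|P|=8<16=2^4$, and Case~(ii) has $|P|=p^3<p^4$, so neither can arise under our hypothesis. This leaves the metacyclic case $P\cong\Gamma(a,b)$ and the three $\Delta$-cases (iv)--(vii). In Cases~(iii), (iv), (v) the fusion system is explicitly of the form $\FS_P(P\rtimes Q)$ with $Q$ a $p'$-group acting on $P$, i.e. $\FS$ is controlled (so $\pcore_p(\FS)=P$, which is trivially centric in itself, and $\FS$ is therefore constrained). This already shows that in Cases~(iii)--(v) both conclusions of the corollary hold.

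The remaining step is to handle the two non-controlled possibilities, namely Case~(vi) ($p=2$, $P\cong\Delta(a,1)$) and Case~(vii) ($p=3$, $P\cong\Delta(a,1)$). In both cases the fusion system is realised by a finite group of the form $M_0\rtimes C_{p^a}$ (or $M_9\rtimes D_{2\cdot 3^a}$) where $M_0\in\{A_4,M_9\}$. In each such model group the unique Sylow $p$-subgroup $V$ of $M_0$ (namely $V_4\le A_4$ or $C_3^2\le M_9$) is characteristic in $M_0$ and hence normal in $G$. Setting $Q:=V\langle x^p\rangle$, one checks — exactly as already computed in the proof of \autoref{thmfusion} — that $Q$ is the essential subgroup identified there, it is abelian of maximal rank, has index $p$ in $P$, and satisfies $\C_P(Q)=Q=\Z(Q)$; the argument in the proof of \autoref{thmfusion} even identifies $\pcore_p(\FS)=Q$. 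This gives an $\FS$-centric normal subgroup of $P$, so $\FS$ is constrained in these cases as well.

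Finally, for the second assertion, if $p\ge 5$ then both Case~(vi) (which forces $p=2$) and Case~(vii) (which forces $p=3$) are excluded, while Cases~(i), (ii) are already excluded by $|P|\ge p^4$. Hence only the controlled Cases~(iii)--(v) remain, which is exactly what we needed. The whole argument is therefore essentially a table lookup against \autoref{thmfusion}; no real obstacle appears, and the only subtlety is the bookkeeping for Cases~(vi) and~(vii), where one must read off from the proof of \autoref{thmfusion} that the model group is $p$-constrained with $\pcore_p(\FS)$ equal to the unique essential subgroup.
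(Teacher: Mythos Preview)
Your proposal is correct and is exactly the intended argument: the paper states the corollary without proof because it is read off directly from the case list in \autoref{thmfusion}, and your case-by-case check does precisely that. The only remark is that for Cases~(vi) and~(vii) you need not reconstruct $Q$ inside the model group; the proof of \autoref{thmfusion} already records explicitly that ``$\FS$ is constrained with $\pcore_p(\FS)=Q$'', which is all that is required.
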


We now gather some information on simple groups in order to prove Theorem~C.
As customary, let
\[\PSL^\epsilon(n,q):=\begin{cases}
\PSL(n,q)&\text{if }\epsilon=1,\\
\PSU(n,q)&\text{if }\epsilon=-1.
\end{cases}\]

The following is certainly known, but included for convenience.

\begin{Lem}\label{pslcyc}
Let $S=\PSL^\epsilon(n,q)$ with a cyclic Sylow $p$-subgroup and $n\ge 3$. Then there exists a unique integer $2\le d\le n$ such that $p$ divides $q^d-\epsilon^d$.
\end{Lem}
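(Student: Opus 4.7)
The plan is to set $d := \ord_p(\epsilon q)$ and to show that cyclicity of the Sylow $p$-subgroup of $S$ forces $d \in (n/2, n]$, giving existence and uniqueness simultaneously. First I verify that $p \nmid q$: otherwise a Sylow $p$-subgroup of $S$ is the image of a maximal unipotent subgroup of order $q^{n(n-1)/2}$, which for $n \ge 3$ contains the commuting transvections $I + E_{12}$ and $I + E_{13}$ (with obvious unitary analogues), hence is non-cyclic. So $d := \ord_p(\epsilon q)$ is well-defined and divides $p - 1$, giving in particular $p \nmid d$. The identity $(\epsilon q)^i - 1 = \epsilon^i(q^i - \epsilon^i)$ shows $p \mid q^i - \epsilon^i$ if and only if $d \mid i$; combined with
\[
|\SL^\epsilon(n, q)| = q^{n(n-1)/2}\prod_{i=2}^{n}(q^i - \epsilon^i),
\]
the non-triviality of the cyclic Sylow $p$-subgroup forces $d \le n$. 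Both $d \ge 2$ and uniqueness in $[2, n]$ reduce to showing that $2d \le n$ contradicts cyclicity, since the indices $i \in [2, n]$ with $p \mid q^i - \epsilon^i$ are the multiples $d, 2d, \ldots$ of $d$.

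Under the assumption $2d \le n$, I would produce $C_p \times C_p$ inside $S$ as follows. For $d \ge 2$, I pick $t$ of order $p$ in a Coxeter torus of $\SL^\epsilon(d, q)$ (which exists because $p \mid q^d - \epsilon^d$ divides $|\SL^\epsilon(d, q)|$). The eigenvalues of $t$ on $\overline{\FF_q}^d$ are Galois conjugates of a primitive $p$-th root of unity lying in $\FF_{q^d} \setminus \FF_q$ by minimality of $d$, so $t$ and each of its non-trivial powers are non-scalar. Embedding $\SL^\epsilon(d, q)^2 \hookrightarrow \SL^\epsilon(n, q)$ block-diagonally with $I_{n-2d}$ in the remaining slot, the commuting pair $\diag(t, I_d, I_{n-2d})$ and $\diag(I_d, t, I_{n-2d})$ is independent modulo scalars---a scalar relation forces $t^a = t^b = \mu I_d$ with $\mu^d = 1 = \mu^p$, hence $\mu = 1$ since $\gcd(d, p) = 1$---so these give $C_p \times C_p$ in $S$. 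For $d = 1$, where $p \mid q - \epsilon$, essentially the same argument using the two diagonal matrices $\diag(\lambda, \lambda^{-1}, 1, \ldots, 1)$ and $\diag(1, \lambda, \lambda^{-1}, 1, \ldots, 1)$ in $\SL^\epsilon(n, q)$ works, except in the residual case $n = 3$, $p = 3$; there the image of any maximal torus of $\PSL^\epsilon(3, q)$ is only cyclic of order $3$, and I would supplement $\tau := \diag(1, \lambda, \lambda^{-1})$ with a cyclic permutation matrix $\sigma \in \SL^\epsilon(3, q)$, using $[\sigma, \tau] \in Z(\SL^\epsilon(3, q))$ to get commuting images $\bar\sigma, \bar\tau$ of order $3$ in $S$ that are independent modulo scalars (a monomial matrix is scalar only when both its diagonal and permutation parts are trivial).

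The step I expect to require the most care is that residual case $n = 3$, $p = 3$, $d = 1$: there the Sylow $p$-subgroup of $S$ is not contained in a single maximal torus, so the second generator must come from a monomial element, and one must verify carefully that the permutation-and-torus pair actually commutes modulo $Z(\SL^\epsilon(3, q))$ in both the linear and the unitary flavour.
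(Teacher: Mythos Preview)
Your argument is correct and takes a genuinely different route from the paper's. The paper fixes a generator $x$ of the Sylow $p$-subgroup, studies its eigenvalues over an extension field, and extracts $d$ from the degree of that extension (treating the linear and unitary cases through the auxiliary $q^*\in\{q,q^2\}$); uniqueness is then read off from the order formula. You instead set $d:=\ord_p(\epsilon q)$ from the start, reduce everything to the single inequality $2d>n$, and establish that by exhibiting an explicit copy of $C_p\times C_p$ inside $S$ whenever $2d\le n$. Your $\epsilon q$ device unifies the two signs more cleanly than the paper's $q^*$, and your block-diagonal construction is entirely elementary. It is also worth noting that your treatment of the residual case $n=3$, $p=3$, $q\equiv\epsilon\pmod 3$ via the monomial pair $(\bar\sigma,\bar\tau)$ is in fact more careful than the paper's one-line claim that ``$S$ contains a subgroup of diagonal matrices isomorphic to $C_p^2$'': the image of the diagonal torus in $\PSL^\epsilon(3,q)$ has $3$-part only $C_3$ when $(q-\epsilon)_3=3$, so an extra Weyl element really is needed there. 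Two minor remarks: the reference to a ``Coxeter torus'' is inessential, since any element $t$ of order $p$ in $\SL^\epsilon(d,q)$ suffices for the block-diagonal step (and non-scalarity of $t$ follows from $\gcd(d,p)=1$ alone); and for the unitary analogue of the unipotent step you should point to commuting root subgroups rather than the literal transvections $I+E_{12}$, $I+E_{13}$, which are not unitary for the standard Hermitian form.
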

\begin{proof}
Since a non-abelian simple group cannot have cyclic Sylow $2$-subgroups, we have $p>2$.
If $p\mid q$, then a Sylow $p$-subgroup of $S$ is given by the set of unitriangular matrices. This subgroup is non-abelian since $n\ge 3$.
Now let $p\nmid q$. If $q\equiv\epsilon\pmod{p}$, then $S$ contains a subgroup of diagonal matrices isomorphic to $C_p^2$. Hence, let $q\not\equiv\epsilon\pmod{p}$. In the following we write $q^*:=q$ if $\epsilon=1$ and $q^*:=q^2$ if $\epsilon=-1$.
Let $x\in S$ be a generator of a Sylow $p$-subgroup of $S$. We identify $x$ with a preimage in $\GL(n,q^*)$. We may assume that $x$ has order $p^k$. Let $e$ be the order of $q^*$ modulo $p^k$. Then $x$ has an eigenvalue $\zeta\in\FF_{(q^*)^e}^\times$ of order $p^k$. 
Since $\tr(x)\in\FF_{q^*}$, the elements $\zeta^{(q^*)^i}$ for $i=0,\ldots,e-1$ are distinct eigenvalues of $x$. In particular, $e\le n$. If $\epsilon=1$, then $e\ge 2$ we can choose $d:=e$ in the statement. If $2e\le n$, we obtain $q^d\equiv 1\equiv \epsilon^d\pmod{p}$ for $d:=2e$.

Now suppose that $\epsilon=-1$ and $2e>n$. Since $x$ is a unitary matrix, we have $\bar{x}x^\TT=1$ where $\bar{x}=(x_{ij}^q)_{i,j}$ and $x^\TT$ is the transpose of $x$. It follows that $\zeta^{-q}$ is an eigenvalue of $x$. Since $n<2e$, there must be some $i$ with $\zeta^{q^{2i}}=\zeta^{-q}$. This shows that $q^{2i-1}\equiv -1\equiv\epsilon^{2i-1}\pmod{p^k}$. Since $q^{2(2i-1)}\equiv 1\pmod{p^k}$, we have $e\mid 2i-1\le 2(e-1)-1<2e$ and $e=2i-1$.
Hence, we can set $d:=e$.

For the uniqueness of $d$, we note that
\[|S|=\frac{q^{n(n-1)/2}}{\gcd(n,q-\epsilon)}\prod_{i=2}^n(q^i-\epsilon^i),\]
is not divisible by $p^{k+1}$, since $p^k=|\langle x\rangle|=|S|_p$.
\end{proof}

\begin{Lem}\label{simple3}
Let $S$ be a finite simple group with Sylow $3$-subgroup $C_3^2$ and outer automorphism of order $3$. Then $S\cong\PSL^\epsilon(3,q^f)$ where $\epsilon=\pm1$, $q$ is a prime and $(q^f-\epsilon)_3=3$. Moreover, $\Out(S)\cong C_3\rtimes (C_f\times C_2)$.
\end{Lem}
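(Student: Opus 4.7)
The plan is to combine the classification of finite simple groups having a Sylow $3$-subgroup $C_3^2$ with a case-by-case inspection of the outer automorphism group. Such a classification is a consequence of CFSG (and is closely related to the classification of saturated fusion systems on $C_3\times C_3$): the finite simple groups with $\Syl_3(S)=C_3^2$ are the alternating groups $A_6,A_7,A_8$, the Mathieu groups $M_{11},M_{22},M_{23}$, the Higman--Sims group $HS$, the linear groups $\PSL(3,q_0)$ with $(q_0-1)_3=3$, and the unitary groups $\PSU(3,q_0)$ with $(q_0+1)_3=3$.

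First I would rule out the alternating and sporadic entries. From \cite[Table~5]{Atlas} one has $\Out(A_6)=C_2\times C_2$, $\Out(A_n)=C_2$ for $n\in\{7,8\}$, $|\Out(M_{11})|=|\Out(M_{23})|=1$, and $|\Out(M_{22})|=|\Out(HS)|=2$. None of these is divisible by $3$, contradicting the hypothesis that $S$ admits an outer automorphism of order $3$. Hence $S\cong\PSL^\epsilon(3,q_0)$ with $(q_0-\epsilon)_3=3$. Writing $q_0=q^f$ with $q$ prime gives the stated form; note that $q\ne 3$, since otherwise the Sylow $3$-subgroup of $\PSL(3,3^f)$ would be the unitriangular matrix group of order $3^{3f}\ge 27$, which is non-abelian.

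For the structure of $\Out(S)$ I would consult the standard description of automorphisms of groups of Lie type \cite[Table~5]{Atlas}. The diagonal subgroup has order $\gcd(3,q^f-\epsilon)=3$ by hypothesis. For $\epsilon=+1$ the field and graph automorphism groups are cyclic of orders $f$ and $2$ respectively and commute, giving $\Out(\PSL(3,q^f))\cong C_3\rtimes(C_f\times C_2)$ as required. For $\epsilon=-1$ the analogous description gives $\Out(\PSU(3,q^f))\cong C_3\rtimes C_{2f}$, with the cyclic factor $C_{2f}$ generated by the Frobenius $x\mapsto x^q$ on $\FF_{q^{2f}}$. To rewrite this in the claimed form I need $f$ to be odd. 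This follows from $(q^f+1)_3=3$: it excludes $q=3$, while $q\equiv 1\pmod 3$ would force $q^f+1\equiv 2\pmod 3$; hence $q\equiv -1\pmod 3$, and then $q^f\equiv(-1)^f\equiv -1\pmod 3$ requires $f$ odd. Consequently $\gcd(2,f)=1$ and $C_{2f}\cong C_f\times C_2$, completing the description.

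The essential input is the underlying classification of simple groups with Sylow $3$-subgroup $C_3^2$; everything else is inspection. The only subtlety is the parity argument in the unitary case, without which the claimed direct-product form $C_f\times C_2$ would not follow from the standard description $C_{2f}$ of the field-graph automorphism group of $\PSU(3,q^f)$.
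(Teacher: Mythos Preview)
Your list of simple groups with Sylow $3$-subgroup $C_3^2$ is incomplete. Beyond the groups you name, the classification in \cite{KoshitaniMiyachi} also contains $\PSL(n,q_0)$ and $\PSU(n,q_0)$ for $n\in\{4,5\}$ as well as $\operatorname{PSp}(4,q_0)$, in each case for $q_0$ with $(q_0^2-1)_3=3$. For a concrete instance, $|\PSL(4,5)|_3=9$ and $\PSL(4,5)$ has no element of order $9$ (a primitive $9$th root of unity lies in $\FF_{5^6}$, so the minimal polynomial of such an element would need an irreducible factor of degree $6$, impossible for a $4\times4$ matrix); hence its Sylow $3$-subgroup is $C_3\times C_3$. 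None of the omitted groups admits an outer automorphism of order $3$, so the conclusion survives, but your case analysis as written does not establish this.

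The paper avoids enumerating the Lie-type cases individually. It observes that any classical group $S$ of dimension $d$ over $\FF_{q^f}$ appearing in the list satisfies $q^f\not\equiv\pm1\pmod 9$; since nonzero cubes modulo $9$ lie in $\{\pm1\}$, this forces $3\nmid f$, so there is no field automorphism of order $3$. Graph automorphisms of classical groups have order at most $2$, so the hypothesised outer automorphism of order $3$ must be diagonal, and that immediately pins down $d=3$ and $S\cong\PSL^\epsilon(3,q^f)$ with $(q^f-\epsilon)_3=3$. The remainder of your argument---the description of $\Out(S)$ and the parity of $f$ in the unitary case---agrees with the paper; your mod-$3$ argument for $f$ odd is in fact slightly more direct than the paper's quadratic-residue computation modulo $9$.
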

\begin{proof}
The simple groups with Sylow $3$-subgroup $C_3^2$ were classified in \cite[Proposition~1.2]{KoshitaniMiyachi}. The alternating groups and sporadic groups do not have outer automorphisms of order $3$. Now let $S$ be a classical group of dimension $d$ over $\FF_{q^f}$. Then $q^f\not\equiv\pm 1\pmod{9}$. This implies $3\nmid f$ and $S$ does not have field automorphisms of order $3$. According to \cite[Table~5]{Atlas}, there must be a diagonal automorphism of order $3$. This forces $d=3$ and $S=\PSL^\epsilon(3,q^f)$ such that $(q^f-\epsilon)_3=3$.
If $\epsilon=1$, then $\Out(S)=C_3\rtimes(C_f\times C_2)$ as desired. If $\epsilon=-1$, then there is no graph automorphism and instead we have a field automorphism of order $2f$. However, since $q^f\equiv 2,5\pmod{9}$, $f$ must be odd and $C_{2f}\cong C_f\times C_2$.
\end{proof}

\begin{ThmC}\label{main}
Let $G$ be a finite group with a minimal non-abelian Sylow $p$-subgroup $P$ and $\pcore_{p'}(G)=1$. Then one of the following holds:
\begin{enumerate}[(i)]
\item\label{p2} $p=2$, $P\in\{D_8,Q_8\}$ and $\pcore^{2'}(G)\in\{\SL(2,q),\PSL(2,q'),A_7\}$ where $q\equiv \pm 3\pmod{8}$ and $q'\equiv \pm 7\pmod{16}$.
\item\label{p3} $|P|=p^3$ and $\exp(P)=p>2$.
\item\label{psolv} $G=P\rtimes Q$ where $Q\le\GL(2,p)$.
\item\label{meta2} $p>2$, $\pcore^{p'}(G)=S\rtimes C_{p^a}$ where $S$ is a simple group of Lie type with cyclic Sylow $p$-subgroups. The image of $C_{p^a}$ in $\Out(S)$ has order $p$.

\item\label{sim2} $p=2$ and $G=\PSL(2,q^f)\rtimes C_{2^ad}$ where $q$ is a prime, $q^f\equiv\pm3\pmod{8}$ and $d\mid f$. Moreover, $C_{2^a}$ acts as a diagonal automorphism of order $2$ on $\PSL(2,q^f)$ and $C_d$ induces a field automorphism of order $d$.

\item\label{sim3} $p=3$ and $\pcore^{3'}(G)=\PSL^\epsilon(3,q^f)\rtimes C_{3^a}$ where $\epsilon=\pm1$, $q$ is a prime, $(q^f-\epsilon)_3=3$ and $G/\pcore^{3'}(G)\le C_f\times C_2$.
\end{enumerate}
\end{ThmC}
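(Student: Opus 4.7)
The plan is to apply \autoref{thmfusion} to the fusion system $\FS:=\FS_P(G)$ and then, in each case, translate the fusion-system data into structural information on $G$ via CFSG. First I would dispose of the small-Sylow possibilities. If $P\in\{D_8,Q_8\}$, the classical Gorenstein--Walter and Brauer--Suzuki classifications of groups with dihedral or quaternion Sylow $2$-subgroups, combined with $\pcore_{2'}(G)=1$, force $\pcore^{2'}(G)$ into the short list in case~(i). If $|P|=p^3$ with $\exp(P)=p>2$, case~(ii) holds by hypothesis. Hence from now on $P\cong\Gamma(a,b)$ or $\Delta(a,b)$ with $|P|\ge p^4$, and $\FS$ belongs to one of cases (iii)--(vii) of \autoref{thmfusion}.

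Next I would split according to the generalised Fitting subgroup $\F^*(G)$. If $\F^*(G)$ is a $p$-group, then $\pcore_{p'}(G)=1$ forces $\C_G(\F^*(G))\le\F^*(G)=\pcore_p(G)$, so $G$ is $p$-constrained. Arguing as in the proof of \autoref{2gen} (with abelian $\F^*(G)$) and using that $\core_G(\Phi(P))\le\F^*(G)\cap\Phi(P)$, one obtains $\pcore_p(G)=P$, so $G=P\rtimes Q$ with $Q$ a $p'$-subgroup of $\Out_\FS(P)\le\GL(2,p)$. This yields case~(iii).

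If instead $\F^*(G)$ contains a non-abelian component $S$, the transitivity argument in \autoref{2gen} together with $|P/\Phi(P)|=p^2$ shows that $S$ is the unique simple component and that $P\cap S$ is a Sylow subgroup of $S$ of Frattini rank at most two. For the non-controlled fusion systems of \autoref{thmfusion}, the essential-subgroup data transfers to $G$: if $p=2$ (case~(vi)) the $A_4$-fusion forces $S$ to have Sylow $2$-subgroup $C_2\times C_2$, whence by Gorenstein--Walter $S=\PSL(2,q^f)$ with $q^f\equiv\pm 3\pmod 8$, and analysing the permissible field and diagonal automorphisms gives case~(v) of Theorem~C. If $p=3$ (case~(vii)) the $M_9$-fusion forces $S$ to have Sylow $3$-subgroup $C_3^2$ admitting an outer automorphism of order $3$, and \autoref{simple3} identifies $S=\PSL^\epsilon(3,q^f)$ with $(q^f-\epsilon)_3=3$, yielding case~(vi) of Theorem~C. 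For the controlled fusion systems (cases (iii)--(v) of \autoref{thmfusion}) with a non-abelian component, a Frattini-quotient argument shows that $S$ must have cyclic Sylow $p$-subgroup, and \autoref{pslcyc} together with the $\Out(S)$ tables in \cite{Atlas} place $S$ in the Lie-type list of case~(iv) of Theorem~C; the outer extension $G/S$ must then contribute exactly a cyclic $C_{p^a}$ mapping to $C_p$ in $\Out(S)$.

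The main obstacle will be in the simple-component cases: going from the abstract fusion system to the specific simple group $S$ and then determining precisely which outer automorphisms can be adjoined while keeping $P$ minimal non-abelian. This requires sharp use of \autoref{pslcyc}, \autoref{simple3}, the $\Out(S)$ data from \cite{Atlas}, and the Frattini-quotient computations of \autoref{2gen}; the $p=3$ case is particularly delicate, because the diagonal and field automorphisms of $\PSL^\epsilon(3,q^f)$ interact non-trivially, and one must verify that the image of $\pcore^{3'}(G)$ in $\Out(S)$ is exactly $C_3$ rather than a larger cyclic $3$-group.
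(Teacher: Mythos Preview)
Your organising principle—classify $\FS_P(G)$ via \autoref{thmfusion} and then split on whether $\F^*(G)$ is a $p$-group or has a component—differs from the paper's, which opens instead with \cite[Theorem~7.5]{NS22} (the structure theorem for $|P:\Z(P)|=p^2$) and only invokes \autoref{thmfusion} inside one of the resulting cases. Your route is workable in principle, but as written it has a genuine gap.

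The step ``$\F^*(G)$ a $p$-group $\Rightarrow \pcore_p(G)=P$'' is false, and nothing in \autoref{2gen} yields it: that theorem only identifies $\F^*(G)$ as the unique minimal normal subgroup, and moreover requires $\core_G(\Phi(P))=1$, which you have not arranged. The model groups of cases~\eqref{const2} and~\eqref{const3} of \autoref{thmfusion}, namely $A_4\rtimes C_{2^a}$, $M_9\rtimes C_{3^a}$ and $M_9\rtimes D_{2\cdot3^a}$, are all \emph{solvable} with $\pcore_{p'}(G)=1$, so for them $\F^*(G)=\pcore_p(G)$ equals the characteristic maximal subgroup $Q\cong C_{p^{a-1}}\times C_p^2$ of $P$, not $P$ itself. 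Your dichotomy places these groups in the first branch, wrongly concludes $G=P\rtimes Q'$, and never revisits them—your second branch assumes a non-abelian component exists, which these groups lack. The paper catches them in its Case~(C), subcase $S=1$: the model theorem pins $G$ down, and the identifications $\PSL(2,3)\cong A_4$ and $\PSU(3,2)\cong M_9$ drop them into \eqref{sim2} and \eqref{sim3}. To repair your argument you must, in the $p$-constrained branch, distinguish the controlled fusion systems (which do give $P\unlhd G$) from the constrained non-controlled ones. A smaller point: your appeal to \autoref{2gen} for uniqueness of the component is also too quick—that theorem gives only transitivity of $P$ on the components, and excluding several components (the paper's Case~2) needs a separate argument using the shape of the maximal subgroups of $P$ together with \cite[Theorem~B]{Gross}.
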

\begin{proof}
By \autoref{lemmna}, $|P:\Z(P)|=p^2$ and $G$ is described in \cite[Theorem~7.5]{NS22}. We go through the various cases in the notation used there:

In Case (A), $S=1$ since $\pcore_p(G)$ is not cyclic. Here $P=\F^*(G)\unlhd G$ and $\C_G(P)\le P$. Since $G/P$ acts faithfully on $P/\Phi(P)\cong C_p^2$, we have $G/P\le\GL(2,p)$ and \eqref{psolv} holds. Assume now that $P<G$.
In Case (B), the quasisimple group $C$ has a non-abelian Sylow $p$-subgroup of order $p^3$ which must coincide with $P$.
If $P=D_8$, then \eqref{p2} or \eqref{sim2} holds by the Gorenstein--Walter theorem (there are no field automorphisms of order $2$). If $P=Q_8$, the claim follows from the Brauer--Suzuki theorem and Walter's theorem. If $p>2$, then we must have $\exp(P)=p$, since otherwise the focal subgroup theorem and \autoref{thmfusion} lead to the contradiction $|P|=|P:P\cap G'|\ge p$. Thus, \eqref{p3} holds.
Case~(D) is impossible, since then $P$ has a non-abelian maximal subgroup.

Now consider Case (C), i.\,e. $\F^*(G)=\pcore_p(G)\times S$ has abelian Sylow $p$-subgroups, $S$ is a direct product of simple groups and $|G:\F^*(G)|_p=p$. Let $x\in P\setminus\F^*(G)$.

\textbf{Case~1:} $S=1$.\\
Since $\C_P(\pcore_p(G))=\pcore_p(G)$, we have $\C_G(\pcore_p(G))=\pcore_p(G)\times K$ where $K\le\pcore_{p'}(G)=1$. Hence, $G$ is $p$-constrained and $\FS_P(G)$ is given by \eqref{const2} or \eqref{const3} of \autoref{thmfusion}.
By the model theorem, the isomorphism type of $G$ is uniquely determined by $\FS_P(G)$. Since $\PSL(2,3)\cong A_4$ and $\PSU(3,2)\cong M_9$, we obtain \eqref{sim2} or \eqref{sim3}.

\textbf{Case~2:} $S\ne 1$ is not simple.\\
By \autoref{redei}, the maximal subgroups of $P$ are generated by at most three elements. Hence, $S$ is a direct product of two or three simple groups, say $S=T_1\times T_2$ or $T_1\times T_2\times T_3$. Then $p>2$ and the $T_i$ have cyclic Sylow $p$-subgroups. If $x$ does not normalize some $T_i$, then $p=3$ and $x$ permutes $T_1\cong T_2\cong T_3$. However, $C_{3^n}\wr C_3$ is not minimal non-abelian. Hence, $x$ acts on each $T_i$. If $x$ acts non-trivially on $\pcore_p(G)$, then $\pcore_p(G)\langle x\rangle$ is non-abelian and $P=\pcore_p(G)\langle x\rangle$. But then $S$ would be simple. Similarly, if $x$ acts non-trivially on $Q_1:=P\cap T_1$, then $P=Q_1\langle x\rangle$. Write $Q_2:=P\cap T_2=\langle y\rangle$ such that $x^p\in yQ_1$. Then $x$ centralizes $y$. By \cite[Theorem~B]{Gross}, this implies that $x$ induces an inner automorphism on $T_2$. However, $x^p$ induces the inner automorphism by $y$. Hence, $x$ cannot have order greater than $|T_2|_p$. Another contradiction.

\textbf{Case~3:} $S$ is simple.\\
Let $Q:=P\cap S\unlhd P$ be a Sylow $p$-subgroup of $S$. Arguing as in Case~2, we see that $x$ acts non-trivially on $Q$ and therefore $P=Q\langle x\rangle$. First let $Q$ be cyclic. Then $p>2$ and $P$ is metacyclic. Since $\Out(S)$ needs to have an element of order $p$, $S$ must be of Lie type. To obtain \eqref{meta2}, it remains to show that $PS$ is normal in $G$. Assume the contrary. By the structure of $\Out(S)$ (see \cite[Table~5]{Atlas}), $P$ induces a field or graph automorphism of order $p$ on $S$ which acts non-trivially on the subgroup of outer diagonal automorphisms of $S$. In particular, the diagonal automorphism group must have order at least $p+1$, in fact $2p+1\ge 7$ since $p>2$. This excludes all families of simple groups except $S=\PSL^\epsilon(d,q^f)$ where $p\mid f$ and $d\ge 2p+1$. Since $Q$ is cyclic and $f>1$, we have $q\ne p$.
By Fermat's little theorem, $q^{(p-1)f}\equiv q^{2(p-1)f}\equiv 1\pmod{p}$. This contradicts \autoref{pslcyc} (note that $p-1$ is even). Hence, $PS\unlhd G$ and \eqref{meta2} holds.

Let $Q$ be non-cyclic.
Recall that in general $Q$ is homocyclic and $\N_S(Q)$ acts irreducibly on $\Omega(Q)$ (see \cite[Proposition~2.5]{FF}).
This implies that $P$ cannot be metacyclic, as otherwise the fusion in $P$ is controlled by $\N_G(P)$ and $\N_G(Q)=\N_G(P)\C_G(Q)$ acts reducibly on $Q$ according to \autoref{thmfusion}. Hence, let $P\cong\Delta(a,b)$. Then $P'$ is a direct factor of $Q$ and we obtain $Q=\Omega(Q)$. If $Q$ has rank $3$, then $P\cong\Delta(2,1)$. However, by \autoref{thmfusion}, $\N_G(Q)/\C_G(Q)\le\GL(2,p)$ does not act irreducibly on $Q$.
Hence, we may assume that $Q$ has rank $2$. Now $P\cong \Delta(a,1)$ with $a\ge 2$. If $\N_G(P)$ controls the fusion in $P$, then $\N_G(Q)$ would fix $P'$. Hence, we are in Case~\eqref{const2} or \eqref{const3} of \autoref{thmfusion}. Consider $p=2$ first. By Walter's theorem (see \cite[p. 485]{Gorenstein}), $S\cong\PSL(2,q^f)$ with $q^f\equiv\pm3\pmod{8}$. It follows that $f$ is odd and $G/PS\le\Out(S)\le C_{2f}$ by \cite[Table~5]{Atlas}. Here $C_2$ induces a diagonal automorphism and $C_f$ is caused by a field automorphism. So \eqref{sim2} holds. Finally, let $p=3$. Here the claim follows easily from \autoref{simple3}.
\end{proof}

Examples for Theorem~C\eqref{meta2} can be constructed as follows: Let $p>2$ and $a\ge 2$. By Dirichlet's theorem, there exists a prime $q\equiv 1+p^{a-1}\pmod{p^{a+1}}$. Then $q^p\equiv 1+p^a\pmod{p^{a+1}}$ and $S:=\PSL(2,q^p)$ has a cyclic Sylow $p$-subgroup $Q$ of order $p^a$. Let $R\cong C_{p^b}$ and construct $G:=S\rtimes R$ where $R$ acts as the field automorphism $\FF_{q^p}\to\FF_{q^p}$, $\lambda\mapsto\lambda^q$ on $S$. By \cite{Gross}, $R$ acts non-trivially on $Q$ and $P:=Q\rtimes R\cong\Gamma(a,b)$. A different example is $G=\Sz(2^5)\rtimes C_5$ for $p=5$.

\begin{Cor}
Let $G$ be a finite group with a minimal non-abelian Sylow $p$-subgroup and $\pcore_{p'}(G)=1$. Then $G$ has at most one non-abelian composition factor.
\end{Cor}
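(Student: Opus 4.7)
The plan is to invoke Theorem~C, which places $G$ in one of six explicit cases, and to count the non-abelian composition factors in each. The easy cases fall out directly: in (v) and (vi) the quotient $G/\pcore^{p'}(G)$ is abelian and $\pcore^{p'}(G)$ contributes exactly one non-abelian composition factor; in (i) the same holds together with the observation that $G/\pcore^{2'}(G)$ has odd order and is therefore solvable by the Feit--Thompson theorem; in (iii), $G=P\rtimes Q$ with $P$ a $p$-group and $Q\le\GL(2,p)$, and $\GL(2,p)$ has only $\PSL(2,p)$ as a non-abelian composition factor (and none when $p\in\{2,3\}$).

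The subtle cases are (ii) and (iv), where $G/\pcore^{p'}(G)$ is merely a $p'$-group of a priori unspecified structure. In both I would first show that $G$ has a unique quasisimple component $L_0$, namely the $C$ from \cite[Theorem~7.5, Case~B]{NS22} in (ii) and $S$ in (iv). Suppose for contradiction that there is a further component $L\ne L_0$. Then $L$ centralises $L_0$, so $L\cap P\le\C_P(L_0)$, a proper and therefore abelian subgroup of the minimal non-abelian group $P$. Since $L_0\le\pcore^{p'}(G)$, the subgroup $L\cap\pcore^{p'}(G)$ is a normal subgroup of the quasisimple $L$, and it cannot be all of $L$ (otherwise $L\le\C_{\pcore^{p'}(G)}(L_0)$ would be abelian). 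Hence $L\cap\pcore^{p'}(G)\le\Z(L)$, and in particular the Sylow $p$-subgroup $L\cap P$ of $L$ lies in $\Z(L)$; it follows that $L/\Z(L)$ is a simple $p'$-group.

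To derive a contradiction I would pass to the normal closure $N:=\langle L^g:g\in G\rangle\le\E(G)$, which is a central product of $G$-conjugates of $L$. Each conjugate still commutes with $L_0$, so the argument above applies to each and gives $N/\Z(N)\cong\prod L^g/\Z(L^g)$, a direct product of simple $p'$-groups; in particular $N/\Z(N)$ is a $p'$-group, while $\Z(N)\le\F(G)=\pcore_p(G)$ is a $p$-group. Since $\Z(N)$ is central in $N$, Schur--Zassenhaus yields $N=\Z(N)\times H$ with $H$ a Hall $p'$-subgroup of $N$. Being a direct factor, $H$ is characteristic in $N$ and hence normal in $G$; but then $H\le\pcore_{p'}(G)=1$, forcing $L$ to be trivial, a contradiction.

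With $L_0$ identified as the only component of $G$, one has $\F^*(G)=\pcore_p(G)\cdot L_0$. Here $\pcore_p(G)$ is cyclic (it embeds in $C_{p^a}$ in case (iv) since $\pcore_p(G)\cap S=1$, and lies in $\Z(C)\le\Z(P)$ of order at most $p$ in case (ii)), so $\Out(\pcore_p(G))$ is abelian. By Schreier's conjecture $\Out(L_0)$ is solvable, and the quotient $G/\F^*(G)$, being embedded in $\Out(\F^*(G))$, therefore has no non-abelian composition factor. The only non-abelian composition factor of $G$ is the one contributed by $L_0$. The main obstacle is the uniqueness step: without the Schur--Zassenhaus trick, one cannot exclude a parasitic simple $p'$ section lurking inside $G/\pcore^{p'}(G)$, which is precisely where the interaction between the hypothesis $\pcore_{p'}(G)=1$ and the centrality of $\Z(N)$ becomes indispensable.
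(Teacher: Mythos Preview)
Your argument is correct, but it is considerably more elaborate than what the paper does. The paper's proof occupies four lines: assuming $G$ is non-solvable, it splits into the cases $|G|_p=p^3$ and $|G|_p>p^3$, and in each case reads off the structure of $\F^*(G)$ directly \emph{from the proof of Theorem~C} (quasisimple in the first case, $S\times C_{p^b}$ in the second). Schreier's conjecture then finishes immediately, since $G/\F^*(G)$ embeds in $\Out(\F^*(G))$.

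Your approach instead works from the \emph{statement} of Theorem~C, and in cases~(ii) and~(iv) reproves the uniqueness of the component by hand via the Schur--Zassenhaus trick. This is a legitimate and self-contained route, and the idea of forcing a hypothetical second component $L$ to have central Sylow $p$-subgroup, then splitting off a normal $p'$-complement of its normal closure, is nice. Two small points are worth tightening. First, the sentence ``being a direct factor, $H$ is characteristic in $N$'' is not literally correct (direct factors need not be characteristic); the real reason is that $H=\pcore_{p'}(N)$. Second, in case~(ii) your claim that $\C_{\pcore^{p'}(G)}(L_0)$ is abelian is asserted but not argued; it is true, since in fact $\pcore^{p'}(G)=C$ here (as $P\le C\unlhd G$ forces $G/C$ to be a $p'$-group and $C$ is perfect), so the centraliser is just $\Z(C)$. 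Alternatively, in case~(ii) you can bypass the $\pcore^{p'}(G)$ detour entirely: since $P\le L_0$ and $[L,L_0]=1$, any $x\in L\cap P$ lies in $L_0$ and is centralised by $L$, so $L\cap P\le\Z(L)$ directly. Either way the gap is easily closed, and your Schur--Zassenhaus argument then goes through.
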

\begin{proof}
We may assume that $G$ is non-solvable. If $|G|_p=p^3$, then $\F^*(G)$ is quasisimple and $G/\F^*(G)\le\Aut(\F^*(G))\le\Aut(\F^*(G)/\Z(F^*(G))$ is solvable by Schreier's conjecture. Otherwise we have $\F^*(G)=S\times C_{p^b}$ for a simple group $S$ and $b\ge0$ by the proof of Theorem~C. Since $\Aut(C_{p^b})$ is abelian, the claim follows again from Schreier's conjecture.
\end{proof}

\begin{CorD}
The character table of a finite group $G$ determines whether $G$ has minimal non-abelian Sylow $p$-subgroups.
\end{CorD}
\begin{proof}
Let $P$ be a Sylow $p$-subgroup of $G$. We may assume that $\pcore_{p'}(G)=1$.
By \cite[Theorem~B]{NS22}, the character table determines whether $|P:\Z(P)|=p^2$. Suppose that this is the case. By \autoref{lemmna}, it remains to detect whether $|P:\Phi(P)|=p^2$. This is true for $|P|=p^3$, so let $|P|\ge p^4$.
By \autoref{2gen} and \autoref{corpsolv}, we may assume that
$\pcore_p(G)=1$. Now by Theorem~C we expect that $\pcore^{p'}(G)=S\rtimes C_p$ for a simple group $S$ with a cyclic Sylow $p$-subgroup $Q$. As usual, $X(G)$ determines the isomorphism type of $S$. If $Q$ is indeed cyclic, then clearly $P$ is $2$-generated and we are done.
\end{proof}

\section*{Acknowledgment}
We thank Gunter Malle for helpful comments and his careful reading of this paper and Chris Parker for answering some questions on fusion systems. Parts of this work were written while both authors were attending the conference “Counting conjectures and beyond” at the Isaac Newton Institute for Mathematical Sciences in Cambridge. We thank the institute for their financial support via the EPSRC grant no EP/R014604/1.
The first author is supported by the Ministerio de Ciencia e Innovaci\'on (Grant PID2019-103854GB-I00 funded by MCIN/AEI/10.13039/501100011033). The second author is supported by the German Research Foundation (\mbox{SA 2864/1-2} and \mbox{SA 2864/3-1}).

\begin{small}

\end{small}
\end{document}